\newcommand{\uni}[1]{{\left\vert\kern-0.25ex\left\vert\kern-0.25ex\left\vert #1  \right\vert\kern-0.25ex\right\vert\kern-0.25ex\right\vert}} 
\newcommand{\unii}[1]{{\vert\kern-0.25ex\vert\kern-0.25ex\vert #1  \vert\kern-0.25ex\vert\kern-0.25ex\vert}} 
\newcommand{\Uniinv}[1]{{\big\vert\kern-0.25ex\big\vert\kern-0.25ex\big\vert #1  \big\vert\kern-0.25ex\big\vert\kern-0.25ex\big\vert}} 
\newcommand{\uniinv}[1]{{\left\vert\kern-0.25ex\left\vert\kern-0.25ex\left\vert #1  \right\vert\kern-0.25ex\right\vert\kern-0.25ex\right\vert}} 
\def\A{\mathcal{A}}
\def\P{\mathcal{P}}
\numberwithin{equation}{section}
\numberwithin{figure}{section}
 \def\Rtt{R_{2}}
\def\Uh{\widehat U}
\def\Vh{\widehat V}
\newcommand*{\Scale}[2][4]{\scalebox{#1}{\ensuremath{#2}}}%
\def\lh{\widehat\lambda}
\def\sh{\widehat\sigma}
\def\xh{\widehat x}
\def\xt{\widetilde x}
\def\Q{Q}
\def\Xt{\widetilde X}
\def\At{\widetilde A}
\def\Lh{\widehat \Lambda}
\def\Ut{\widetilde U}
\def\Vt{\widetilde V}
\def\St{\widetilde \Sigma}
\def\Sh{\widehat \Sigma}
\def\Xh{\widehat X}
\def\Xt{\widetilde X}
\def\ut{\widetilde u}
\def\uh{\widehat u}
\def\ggap{{\rm gap}_c}
\def\gap{{\rm gap}}
\def\Gap{{\rm Gap}}
\newtheorem{theorem}{Theorem}[section]
\newtheorem{remark}{{\sc Remark}}[section]
\newcommand{\ignore}[1]{}
\title[Ritz vectors and singular vectors]{
Sharp error bounds for Ritz vectors 
and approximate singular vectors 
}
\author{Yuji Nakatsukasa}
\address{
Mathematical Institute, University of Oxford, Oxford, OX2 6GG, UK.
}
\email{nakatsukasa@maths.ox.ac.uk}
\thanks{This work was supported by JSPS grants No. 17H01699 and 18H05837, and JST grant JPMJCR1914.}
\subjclass[2010]{Primary 15A18, 15A42, 65F15}
\keywords{Rayleigh-Ritz, eigenvector, Davis-Kahan, error bounds, singular vector, self-adjoint operator}
\begin{document}
\maketitle

\begin{abstract}
  We derive sharp bounds for the accuracy of approximate
  eigenvectors (Ritz vectors) obtained by the Rayleigh-Ritz
  process for symmetric eigenvalue problems.
Using information that is available or easy to estimate, 
our bounds improve the classical Davis-Kahan $\sin\theta$ theorem
by a factor that can be arbitrarily large, and can give 
nontrivial information even when the $\sin\theta$ theorem suggests that a Ritz vector 
might have no accuracy at all. 
We also present extensions in three directions, deriving 
error bounds for 
 invariant subspaces, 
 singular vectors and subspaces 
computed by a (Petrov-Galerkin) projection SVD method, and 
 eigenvectors of self-adjoint operators on a Hilbert space. 
\end{abstract}


\section{Introduction}
It is well known that the eigenvector
corresponding to 
a near-multiple eigenvalue is ill-conditioned. 
Specifically, 
the classical Davis-Kahan theory~\cite{daviskahan} implies that the condition number of eigenvectors of symmetric or Hermitian matrices is $1/\gap$, where $\gap$ is the smallest distance between the particular eigenvalue and the other eigenvalues. For example, if $(\lh,\xh)$ with $\|\xh\|=1$ is an approximation to an exact eigenpair $(\lambda,x)$ of a symmetric matrix $A$ 
with residual $\|r\|=\|A\xh- \lh\xh\|$, then the Davis-Kahan $\sin\theta$ theorem gives the error bound for 
$\xh$~\cite{daviskahan},\cite[Ch.~11]{parlettsym}:
\begin{equation}  \label{eq:classical}
\sin\angle(x,\xh)\leq \frac{\|r\|}{\ggap},  
\end{equation}
 where 
$\angle(x,\xh) = \mbox{acos}\frac{|\xh^Tx|}{\|\xh\|\|x\|}$ and 
 $\ggap$ is the distance between $\lh$ and the eigenvalues of $A$ other than $\lambda$ (where the subscript stands for ``classical''). 
Here and throughout, $\|\cdot\|$ for vectors denotes the standard Euclidean norm. 
In view of the bound~\eqref{eq:classical}, it is commonly believed that if $\ggap$ is smaller than the residual $\|r\|$, then we cannot guarantee any accuracy in the computed eigenvector $\xh$.

In this work, 
we partly challenge this belief. 
Namely, we examine the accuracy of eigenvectors obtained by the Rayleigh-Ritz process (R-R), the most widely-used process for computing partial (usually extremal) eigenpairs of large-scale symmetric/Hermitian matrices, and show that~\eqref{eq:classical} can be improved---often significantly, and by a factor that can be arbitrarily large---using quantities that are readily available (or can be estimated cheaply) after the computation.

Of course, the classical Davis-Kahan bound is tight in general: In the absence of additional information other than $\|r\|$ and $\ggap$, we cannot improve~\eqref{eq:classical}, in that 
there exist examples for which the bound~\eqref{eq:classical} is essentially tight. 
However, when $(\lh,\xh)$ is a computed approximate eigenpair (Ritz pair) 
obtained by R-R, there is usually abundant additional information available that~\eqref{eq:classical} does not use: most importantly, the residual $r$ is orthogonal to the trial subspace, which is rich in the eigenspace corresponding to not only $\lambda$ but also eigenvalues close to $\lh$. 
Moreover, since the trial subspace in R-R usually contains approximation to 
nearby eigenpairs (e.g. when looking for the smallest eigenvalues), 
a bound can be computed for $\Gap$ (which we call the ``big Gap''), 
which is roughly the distance between the Ritz value $\lh$ and eigenvalues not approximated by the Ritz values; see~\eqref{eq:Gapgapdef} for the precise definition. 
These are the crucial properties that allow us to improve the Davis-Kahan bound~\eqref{eq:classical}---in other words, we take into account the matrix structure generated automatically by R-R to derive sharp bounds for the Ritz vector error. 

Our results essentially show that up to a modest constant, the $\ggap$ in \eqref{eq:classical} can be replaced by $\Gap$, which is usually much wider, thus improving classical results. 
Another way to understand our results is via (structured) perturbation theory: while an eigenvector has condition number $1/\ggap$ if a general perturbation is allowed, R-R imposes a structure in the perturbation that reduces the \emph{structured} condition number to $1/\Gap$. 

Qualitatively speaking, the fact that the accuracy of Ritz vectors depends on $\Gap$ rather than $\ggap$ was pointed out by Ovtchinnikov~\cite[Thm.~4]{ovtchinnikov2006cluster}. However, the bounds there involve quantities that are unavailable and diffucult to estimate, such as the projector onto an exact eigenspace. 
 Our bounds  are easy to compute or estimate, using information that is available after a typical computation of an approximate eigenpairs via the R-R process. Our bounds are also tight, in that they cannot be improved without additional information. 

In addition, we extend the results in three ways. 
First, we obtain error bounds for invariant subspaces 
(spanned by more than one eigenvector)
computed by R-R. This gives an answer to one of the open problems suggested in Davis-Kahan's classical paper. 
Second, we derive their SVD variants, establishing tight bounds for the quality of approximate singular vectors and singular subspaces associated with  the largest singular values, obtained by a (Petrov-Galerkin) projection method. 
Finally, we generalize the error bounds to eigenvectors of self-adjoint operators on a Hilbert space. 

\emph{Notation.}
$\lambda(A)$ denotes the spectrum (set of eigenvalues) of a symmetric matrix $A$. 
 $\sigma(A)=\{\sigma_i(A)\}_{i=1}^{\min(m,n)}$ is the set of singular values of $A\in\mathbb{C}^{m\times n}$, where $\sigma_1(A)\geq \sigma_2(A)\geq \cdots \geq \sigma_{\min}(A)=\sigma_{\min(m,n)}(A)\geq 0$. 
$I_n$ denotes the $n\times n$ identity matrix. 
$Q_\perp\in\mathbb{C}^{n\times (n-k)}$ is the orthogonal complement of 
$Q\in\mathbb{C}^{n\times k}$. 
Quantities involved in the R-R process wear a hat (e.g. $\lh,\xh,\Xh$), and those with tildes are auxiliary objects for the analysis. 
Norms
 $\|\cdot\|$ without subscripts denote the spectral norm equal to the largest singular value, which for vectors are the Euclidean norm.
 $\uniinv{\cdot}$ for inequalities that hold for any fixed unitarily invariant norm. Inequalities involving $\|\cdot\|_{2,F}$ hold for the spectral and Frobenius norms, but not necessarily for any unitarily invariant norm. 
We denote by $\mathcal{A}$ a self-adjoint operator on a Hilbert space, $\lambda(\mathcal{A})$ its spectrum, and $\|\mathcal{A}\|$ its spectral (operator) norm.
We drop the subscript $i$ in $\lh_i,\lambda_i$ when this can be done without causing confusion. 
We always normalize eigenvectors and Ritz vectors to have unit norm $\|x\|=\|\xh\|=1$. 

Unless otherwise stated, for definiteness we assume that the Ritz values $\lh_1,\ldots\lh_k$ approximate the smallest eigenvalues of $A$ (and accordingly the Ritz values are arranged in increasing order $\lh_1\leq \lh_2\leq \cdots \leq\lh_k$). This is a typical situation in applications, and clearly the discussion covers the case where the largest eigenvalues are sought (if necessary by working with $-A$). A less common but still important case is when interior eigenvalues are desired, for example those lying in an interval~(e.g.~\cite{li2016thick}). Our results are applicable to this case also; one subtlety here is that some care is needed in estimating $\Gap_i$, since the Ritz values tend to contain outliers in this case. 

\section{Setup}\label{sec:setup}
\subsection{Big (good) $\Gap$, small (bad) $\gap$}\label{sec:RR}
Let $A\in\mathbb{C}^{n\times n}$ be the (large) Hermitian matrix whose partial eigenvalues are sought, and let $\Q\in\mathbb{C}^{n\times k} (n\geq k$, usually $n\gg k)$ be a trial subspace with orthonormal columns $\Q^*\Q=I_k$
(obtained e.g. via Lanczos, LOBPCG, Jacobi-Davidson or the generalized Davidson method~\cite{baitemplates}). Following standard practice, 
for a matrix with orthonormal columns $\Q$, we identify the matrix $\Q$ with its column space $\mbox{Span}(\Q)$. 
R-R obtains approximate eigenvalues (Ritz values) and eigenvectors 
(Ritz vectors) as follows. 
\begin{enumerate}
\item Compute the $k\times k$ matrix $\Q^{\ast}A\Q$. 
\item Compute the eigendecomposition 
 $\Q^{\ast}A\Q=\Omega\Lh \Omega^\ast$. 
$(\lh_1,\ldots,\lh_k)=\mbox{diag}(\Lh)$ are the Ritz values, and 
$\Xh:=[\xh_1,\ldots,\xh_k] = Q\Omega$ are the Ritz vectors. 
\end{enumerate}
The Ritz pairs $(\widehat\lambda_i,\widehat x_i)$ thus obtained satisfy
 $\widehat x_i\in \mbox{span}(\Q)$ for all $i$, and 
since $Q^*(AQ\Omega-Q\Omega\widehat\Lambda)=Q^*AQ\Omega-\Omega\widehat\Lambda=
\Omega\Lh \Omega^\ast\Omega-\Omega\widehat\Lambda=0$ by construction, we have---crucially for this work---the orthogonality between $Q$ and  the residuals $A\xh_i-\lh_i\xh_i\perp \Q$, for every $1\leq i\leq k$. 
Throughout we assume $k\geq 2$; indeed when $k=1$ there is no room for improvement upon Davis-Kahan.


Underlying R-R is a matrix of particular structure: 
Let $Q_\perp$ be the orthogonal complement of $Q$, such that 
  $[Q\ Q_\perp]$ is a square unitary matrix (and hence so is  $[Q\Omega, Q_\perp]$), and consider the unitary transformation applied to $A$
\begin{equation} \label{eq:At}
\At:=
  [Q\Omega,Q_\perp]^*A[Q\Omega,Q_\perp]= \begin{bmatrix}
\quad 
    \begin{matrix}
     \lh_1 && \\
       &\ddots& \\
      &&\lh_k            
    \end{matrix}\quad 
\begin{matrix}
\mbox{---}r_1^T\mbox{---}\\
 \vdots  \\
\mbox{---}r_k^T\mbox{---}
\end{matrix}\quad \quad 
\\
 \begin{matrix}
|& &| \\  
r_1&\cdots &r_k
\\
|& &|
 \end{matrix} \begin{matrix}
& & \\  & \Scale[2.5]{A_3}
\\& 
 \end{matrix}
\end{bmatrix}.
\end{equation} 

Here $R=(Q_\perp)^* AQ\Omega=[r_1,r_2,\ldots,r_k]$; we use the subscript $3$ in $A_3$ because later we partition the $(1,1)$ block further into two pieces. 

Suppose $(\lambda_i,\xt_i)$ is an exact eigenpair of $\At$ such that $\At\xt_i=\lambda_i\xt_i$. 
Denote by $z_i\in\mathbb{C}^{n-k}$ the vector of the bottom $n-k$ elements of $\xt_i$, and by $w_i$ the $i$th element, and by $y_i\in\mathbb{C}^{k-1}$ the first $k$ elements of $\xt_i$, except the $i$th.
For example when $i=1$, we have $\xt_1=\left[\begin{smallmatrix}w_1\\ y_1\\z_1\end{smallmatrix}\right]$.
Then since $x_i=[\widehat X,Q_\perp]\xt_i$ is the corresponding eigenvector of $A$, and $(\lh_i,\xh_i)$ is a Ritz pair with 
$\xh_i =[\widehat X,Q_\perp]e_i$ where $e_i$ is the $i$th column of identity $I_n$, it follows that 
$\cos\angle(x_i,\xh_i)=|e_i^T\xt_i|=|w_i|$, and hence 
\begin{equation}
  \label{eq:sinx1xh1}
\sin\angle(x_i,\xh_i)=\sqrt{\|y_i\|^2+\|z_i\|^2}.  
\end{equation}
 This is a key fact in the forthcoming analysis. 

Fundamental in this work is the distinction between the ``big gap'' $\Gap_i$ and the ``small gap'' $\gap_i$, defined for $i=1,\ldots,k$ by 
\begin{equation}
  \label{eq:Gapgapdef}
\Gap_i:=\min|\lambda_i-\lambda(A_3)|  ,\qquad 
\gap_i:=\min_{j\in\{1,\ldots,k\}\backslash i}|\lambda_i-\lh_j|.
\end{equation}
Intuitively, $\Gap_i$ measures the distance between the target $\lambda_i$ and the undesired eigenvalues, whereas $\gap_i$ is that between $\lambda_i$ and all the other eigenvalues\footnote{\label{foot1}There is a subtle difference between $\Gap_i,\gap_i$ in~\eqref{eq:Gapgapdef} and $\ggap$ in~\eqref{eq:classical}, in that the former is the difference between an exact desired eigenvalue and approximate undesired eigenvalues, whereas $\ggap$ is the opposite. 
Our forthcoming analysis uses~\eqref{eq:Gapgapdef}, and 
we return to this difference in Remark~\ref{rem:gapdef}.
Also note that 
we do not explicitly assume that $\Gap_i\geq \gap_i$; the bounds continue to hold without such assumptions, although when they do not hold, the bounds can be worse than Davis-Kahan.}, 
including the desired ones (e.g, $\lambda_{2}$). 
For example when $R\rightarrow 0$, we have $\Gap_i\rightarrow\min|\lambda_i-\lambda_{k+1}|$; 
by contrast $\gap_i \rightarrow \min(|\lambda_i-\lambda_{i+1}|,|\lambda_i-\lambda_{i-1}|)$. Observe that 
$\Gap_i\geq \gap_i$, and we typically have $\Gap_i\gg \gap_i$. 
We illustrate this in Figure~\ref{fig:gapGap} for $i=1$. Throughout the paper, it is helpful to consider the case $i=1$, where the target eigenpair is the smallest one\footnote{When the target eigenpairs are the smallest or largest ones, it also becomes easier to obtain lower bounds for $\gap_1$ and $\Gap_i$. For example, since the Courant-Fisher minimax theorem $\lambda_i\leq \lh_i$, we have $\gap_1\geq \min_{j\in\{1,\ldots,k\}\backslash 1}|\lh_1-\lh_j|$, which involves only computed quantities. Similarly, we have $\Gap_i\geq \min|\lh_i-\lambda(A_3)|$.}. 
\begin{figure}[htbp]
  \centering
\footnotesize
\begin{tikzpicture}[decoration=brace]
\draw (0,0) -- (8,0);
\draw (.5,-0.44) -- (.5,0.1) node[anchor=south] {$\lambda_1$};
\draw (.7,-0.44) -- (.7,0.1) node[anchor=south] {\ $\lambda_2$};
\draw (.7,-0.44) -- (.7,0.1) node[anchor=south] {\ $\lambda_2$};
\draw (1.3,0.1) node[anchor=south] {\ $\cdots$};
\draw (2,-0.44) -- (2,0.1) node[anchor=south] {\ $\lambda_k$};
\draw[decorate, yshift=0.3ex]  (2.2,0) -- node[above=0ex] {$\mbox{eig}(A_3)
=\mbox{eig}(Q_\perp^*AQ_\perp)
$}  (8,0);
\draw[blue,thick,<->] (.5,-0.4) -- (.7,-0.4) node[anchor=north] {\ \scriptsize $\mbox{gap}_1$};
\draw[red,thick,<->] (.5,-0.2) -- (2.2,-0.2) node[anchor=north,xshift=-4.5ex] {\scriptsize $\mbox{Gap}_1$};
\draw[thick,<->] (2,-0.4) -- (2.2,-0.4) node[anchor=north] {\ \scriptsize $\mbox{gap}_k$};
\end{tikzpicture}
\normalsize 
  \caption{Illustration of typical situation when the smallest eigenvalues are sought, and $R$ is small enough so that $\lh_i\approx \lambda_i$. While the small gap is 
$\gap_i=\min_{j\neq i}|\lh_i-\lambda_j|\approx \min_{j\neq i}|\lh_i-\lh_j|$, the big Gap is much bigger
$\Gap_i=\min|\lambda_i-\lambda(A_3)|$. 
}
  \label{fig:gapGap}
\end{figure}

In addition to $\gap_i$ and $\Gap_i$, some of the bounds we derive involve $|\lambda_i-\lh_j|$ for a fixed $j\in\{1,\ldots,k\}\backslash i$. These lie between $\gap_i$ and $\Gap_i$. 

Recalling~\eqref{eq:At}, the information clearly available after R-R are the Ritz pairs $(\lh_i,\widehat x_i)$ for $i=1,\ldots, k$ and the norms of the individual $r_i$, 
 because they are equal to the residuals $\|r_i\|=\|A\widehat x_i-\lh_i \widehat x_i\|$. 
In addition, one can reasonably expect that an estimate (or better yet, a lower bound) is available for $\Gap_i$ for each $i$, or at least for small $i$: when 
the smallest $k$ eigenpairs are sought, 
the trial subspace $\Q$---assuming it has been chosen appropriately by the algorithm used---is
 expected to be rich in the eigenspace corresponding to those eigenvalues. 
 It then follows from standard eigenvalue perturbation theory that $A_3$ contains only eigenvalues that are roughly at least as large as $\lambda_{k+1}(A)$ (up to $\|R\|$, or indeed~$\frac{\|R\|^2}{\gap}$~\cite{rcli05}).
Therefore, although the exact value of $\lambda_{k+1}(A)$ is unknown, we can use the knowledge of the Ritz values $\lh_i$ to estimate $\Gap_i$, for example $\Gap_i \approx |\lh_i-\lh_{k+1}|$ or 
$\Gap_i \gtrsim |\lh_i-\lh_{k}|$; we use the latter, approximate lower bound in our experiments. 
Similarly, one can estimate $\gap_i$ for example as $\gap_i\approx \min_{j\neq i}|\lh_i-\lh_j|$. 

In practice, an important feature of the residuals is that they are typically graded: $\|r_1\|\ll\|r_2\|\ll\cdots\ll \|r_k\|$. This is because the extremal eigenvalues converge much faster than interior ones; a fact deeply connected with polynomial (and rational) approximation theory~\cite[\S~33]{trefbau}. We derive bounds (e.g. Theorem~\ref{thm:eigrefine}) that respect this property, and hence give sharp bounds in practical situations.

We note that previous bounds exist that involve the big $\Gap_i$ rather than $\gap_i$; most notably 
(aside from Ovtchinnikov's result~\cite{ovtchinnikov2006cluster} mentioned in the introduction) Davis-Kahan's generalized $\sin\theta$ theorem  where the angles between subspaces of different dimensions are bounded~\cite[Thm.~6.1]{daviskahan}. In this case, however, (in addition to comparing e.g. a vector and a subspace rather than two vectors) the numerator is replaced by the entire $\|R\|$ rather than the $i$th column $\|r_i\|$. 
The bounds we derive  
 essentially show that, up to a small constant, (i) the small $\gap_i$ in~\eqref{eq:classical} can be replaced by the big $\Gap_i$, and (ii) the 
numerator is the $i$th column $\|r_i\|$. 
These combined give a massively improved error bound for $\xh_i$, especially for small values of $i$. The next section illustrates the first aspect, and the second will be covered in Section~\ref{sec:eigsharp}. 

\section{$2\times 2$ partitioning}\label{sec:2by2}
We will derive three error bounds for Ritz vectors; the first, obtained in this section, is simple and vividly illustrates the roles of $\gap$ and $\Gap$, but not sharp in a practical setting. 
In Section~\ref{sec:eigsharp} we derive two more bounds that give better bounds in practice. 

Here we consider a simplified $2\times 2$ block partitioning  of~\eqref{eq:At}  where 
\begin{equation}
  \label{eq:Aef}
\At (=
  [Q\Omega,Q_\perp]^*A[Q\Omega,Q_\perp])=
\begin{bmatrix}
  \Lh_1 & R^*\\R& A_3
\end{bmatrix},   
\end{equation}
where $\Lh_1=\mbox{diag}(\lh_1,\Lh_2)\in\mathbb{R}^{k\times k}$ and $\|R\|=\|A\Xh-\widehat X\Lh_1\|$ are the computed quantities. In other words, we do not distinguish the columns $r_i$ of $R$ but treat $\|R\|$ as a single residual term. 

\ignore{
Rather than $O(\|R\|_2/\gap)$ as in~\eqref{eq:classical}, 
simple experiments suggest the 
accuracy of 
 $\xh$ is more like 
\begin{itemize}
\item if $\|R\|_2\leq \gap$, then $\angle(x,\xh )=O(\|R\|_2/\Gap)$. 
\item if $\|R\|_2\geq \gap$, then $\angle(x,\xh )= O(\|R\|_2^2/\gap\cdot \Gap)$. 
\end{itemize}
Overall, a bound $O(\|R\|_2/\Gap+\|R\|_2^2/\gap\cdot \Gap)$ seems to be what we observe.  
Note how the classical bound  is insufficient to explain this: 
When $\|R\|_2\leq \gap$, the bound~\eqref{eq:classical} is an overestimate by a factor $\gap/\Gap$, which is typically $\ll 1$. 
Moreover, when $\|R\|_2\geq \gap$, classical results suggest $\xh $ may have no accuracy at all. Nonetheless, there is still significant accuracy in $\xh $ until $\|R\|_2=O(\sqrt{\gap\cdot \Gap})$. The bounds we derive below reflect this behavior. 
That is, we derive bounds that show that if 
\[
\gap \leq \|R\|_2\leq \sqrt{\gap},
\]
then there is a bound for $\sin\angle(x,\xh )$ that is nontrivial (smaller than 1). When $\gap \geq \|R\|_2$, for which classical results give the nontrivial bound $\|R\|_2/\gap$, our result improves the bound to $O(\|R\|_2/\Gap)$. 
These results are particularly relevant when $\|R\|_2$ is much larger than working precision. 
}

Below, we derive bounds for $\sin\angle(x_i,\xh_i)$ applicable to $i=1,\ldots,k$. In our analysis, we assume that $\lh_i$ is the $(1,1)$ element of $\At$. 
This simplifies the discussion and loses no generality as we can permute the leading $k\times k$ block of $\At$. 
Moreover, we drop the subscript $i$ in the remainder of this section for simplicity.


\begin{theorem}\label{thm:mainscalar}
Let $A$ be a Hermitian matrix as in~\eqref{eq:At}, 
for which $(\lh,\xh)$ is a Ritz pair with $\lh=\lh_1$. 
Let $(\lambda,x)$ be an eigenvector of $A$, and 
let $\Gap=\min|\lambda-\lambda(A_3)|$ and 
$\gap=\min|\lambda-\lambda(\Lh_2)|$. 
Then writing $R_{2}=[r_2,r_3,\ldots,r_k]$, we have 
\begin{equation}  \label{eq:boundvec}
\sin\angle(x,\xh )\leq 
\frac{\|R\|}{\Gap}\sqrt{1+\frac{\|R_{2}\|^2}{\gap^2}}
\quad
\left(
\leq  \frac{\|R\|}{\Gap}(1+\frac{\|R_{2}\|}{\gap})
\right). 
\end{equation}
\end{theorem}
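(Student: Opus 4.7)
The plan is to exploit the identity \eqref{eq:sinx1xh1}, which reduces everything to bounding $\|y\|$ and $\|z\|$ where $\xt=[w;y;z]$ is the eigenvector of $\At$ associated with $\lambda$, partitioned conformally with
\[
\At = \begin{bmatrix} \lh & 0 & r_1^* \\ 0 & \Lh_2 & R_{2}^* \\ r_1 & R_{2} & A_3 \end{bmatrix},
\]
that is, $w\in\mathbb{C}$ is the first component, $y\in\mathbb{C}^{k-1}$ contains the next $k-1$ components, and $z\in\mathbb{C}^{n-k}$ contains the trailing $n-k$ components. Since $\|\xt\|=1$, we have $|w|^2+\|y\|^2+\|z\|^2=1$, so in particular $\|[w;y]\|\leq 1$.

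Next, I would write the eigenvalue equation $\At\xt=\lambda\xt$ in block form. The middle block row reads $(\Lh_2-\lambda I)y = -R_{2}^*z$; since $\Lh_2$ is Hermitian with spectrum at distance at least $\gap$ from $\lambda$, it is nonsingular and
\[
\|y\| \leq \frac{\|R_{2}\|}{\gap}\|z\|.
\]
The bottom block row reads $(A_3-\lambda I)z = -r_1 w - R_{2} y = -R\,[w;y]$; using $\|(A_3-\lambda I)^{-1}\|=1/\Gap$ and $\|[w;y]\|\leq 1$,
\[
\|z\| \leq \frac{\|R\|\,\|[w;y]\|}{\Gap}\leq \frac{\|R\|}{\Gap}.
\]
These two bounds are the heart of the argument: the expensive bound involving the (large) $\Gap$ applies only to $\|z\|$, while $\|y\|$ is controlled through the (small) $\gap$ but multiplied by the smaller quantity $\|R_{2}\|$ rather than the full $\|R\|$.

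Combining via \eqref{eq:sinx1xh1} gives
\[
\sin^2\angle(x,\xh) = \|y\|^2+\|z\|^2 \leq \|z\|^2\Bigl(1+\tfrac{\|R_{2}\|^2}{\gap^2}\Bigr)\leq \frac{\|R\|^2}{\Gap^2}\Bigl(1+\tfrac{\|R_{2}\|^2}{\gap^2}\Bigr),
\]
and taking square roots yields the principal bound in \eqref{eq:boundvec}. The parenthetical looser form follows immediately from $\sqrt{1+a^2}\leq 1+a$ for $a\geq 0$.

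There is essentially no hard step once the block partitioning is set up correctly; the only subtlety is that the argument implicitly requires $\gap>0$ and $\Gap>0$ so that the two block matrices are invertible, but if either vanishes the claimed bound is vacuous. The conceptual content (as opposed to technical difficulty) is in recognising that the partition which isolates $\lh$ in the $(1,1)$ position allows the middle and bottom block equations to be decoupled in the way above; this is what produces the product structure with $\|R\|$ tied to $\Gap$ and the smaller $\|R_{2}\|$ tied to $\gap$, rather than the coarser Davis--Kahan estimate that would result from treating $[y;z]$ as a single block.
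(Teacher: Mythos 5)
Your proof is correct and is essentially identical to the paper's: the same block partition of $\At\xt=\lambda\xt$, the same bound $\|z\|\leq\|R\|/\Gap$ from the bottom block, the same bound $\|y\|\leq(\|R_2\|/\gap)\|z\|$ from the middle block, combined through the identity $\sin\angle(x,\xh)=\|[y;z]\|$. No differences worth noting.
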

Note that clearly $\|R_{2}\|\leq \|R\|$, so 
the result implies 
$\sin\angle(x,\xh )\leq 
\frac{\|R\|}{\Gap}\sqrt{1+\frac{\|R\|^2}{\gap^2}}
\leq  \frac{\|R\|}{\Gap}(1+\frac{\|R\|}{\gap}). 
$
\begin{proof}  
Let 
$\xt=\begin{bmatrix} w\\ y\\z\end{bmatrix}$ be an eigenvector of $\At$ 
as in~\eqref{eq:Aef}
such that $\At
\begin{bmatrix}
 w\\ y\\z
\end{bmatrix}=\lambda 
\begin{bmatrix}
 w\\ y\\z
\end{bmatrix}$, with $w\in\mathbb{C},y\in\mathbb{C}^{k-1}$. 
Then 
since $\sin\angle(x,\xh ) = 
\left\|
\begin{bmatrix}
y\\z
\end{bmatrix}
\right\|$ from~\eqref{eq:sinx1xh1}, the goal is to bound 
$\|y\|$ and $\|z\|$. 
The bottom part of $\At\xt=\lambda \xt$ gives 
\[
(\lambda I_{n-k}-A_3) z 
=R\begin{bmatrix}
  w\\y
\end{bmatrix}
, 
\]
from which we obtain 
\begin{equation}  \label{eq:dkspecial}
\|z\|\leq \|(\lambda I_{n-k}-A_3)^{-1}\|
\left\|R\begin{bmatrix}
  w\\y
\end{bmatrix}
\right\|
\leq \|(\lambda I_{n-k}-A_3)^{-1}\|\|R\| = \frac{\|R\|}{\Gap}  . 
\end{equation}
Note that the denominator is $\Gap$, not $\gap$. 
We also note that the final bound is Davis-Kahan's generalized $\sin\theta$ theorem where subspaces of different sizes ($\xh$ and the $[x_1,\ldots,x_k]$) are compared (and when the perturbation is off-diagonal); in fact, we can also obtain $\|z\|\leq \frac{\|R\|}{\Gap} 
\left\|
\big[
\begin{smallmatrix}
  w\\y  
\end{smallmatrix}
\big]
\right\|$, which is the generalized $\tan\theta$ theorem. 

From the second block of
$\At\xt=\lambda \xt$  we have 
\[
(\lambda I_{k-1} -\Lh_{2})y = R_{2}^*z,
\]
and since $\|(\lambda I_{k-1} -\Lh_{2})y\|\geq 
\gap \|y\|$, 
 we obtain the important bound 
\begin{equation}
  \label{eq:ybound1}
\|y\|
\leq \frac{\|R_{2}\|}{\gap}\|z\|.  
\end{equation}
Combining with~\eqref{eq:dkspecial} we obtain 
\begin{equation}
  \label{eq:y2knya}
\|y\|
\leq \frac{\|R\|\|R_{2}\|}{\gap\cdot \Gap} . 
\end{equation}
Therefore, 
we conclude that 
\[
\sin\angle(x,\xh )^2 = 
\left\|
\begin{bmatrix}
y\\z
\end{bmatrix}
\right\|^2 \leq  
\frac{\|R\|^2}{\Gap^2}\left(
1+\frac{\|R_{2}\|^2}{\gap^2}
\right), 
\]
giving~\eqref{eq:boundvec}. 
\end{proof}

\ignore{
Let us examine how well the bound~\eqref{eq:boundvec} predicts the experimental observations. 
\begin{itemize}
\item if $\|R\|\leq \gap$, then~\eqref{eq:boundvec} gives $\angle(x,\xh )=O(\|R\|/\Gap)$; in particular, $\angle(x,\xh )\leq \sqrt{2}\|R\|/\Gap$. 
\item if $\|R\|\geq \gap$, then~\eqref{eq:boundvec} gives  $\angle(x,\xh )= O(\|R\|^2/(\gap\cdot \Gap))$. 
\end{itemize}
These both accurately reflect the experiments, see also Section~\ref{sec:exp}. 
}

We make several remarks regarding the theorem. 

\begin{remark}[Qualitative behavior of bounds]
Theorem~\ref{thm:mainscalar} shows that 
\begin{itemize}
\item if $\|R\|\leq \gap$, then $\sin\angle(x,\xh )\lesssim \frac{\|R\|}{\Gap}$. 
\item if $\|R\|\geq \gap$, then $\sin\angle(x,\xh )\lesssim \frac{\|R\|^2}{\gap\cdot \Gap}$. 
\end{itemize}
Note how Davis-Kahan's bound is insufficient to explain these: 
When $\|R\|\leq \gap$, we improve the bound~\eqref{eq:classical} 
by a factor $\gap/\Gap$, which is typically $\ll 1$. 
Moreover, when $\|R\|\geq \gap$, classical results suggest $\xh $ may have no accuracy at all. Nonetheless, Theorem~\ref{thm:mainscalar} shows that there is still 
a nontrivial bound for $\sin\angle(x,\xh )$  as long as $\|R\|\lesssim \sqrt{\gap\cdot \Gap} (\gg \gap)$. 
These results are particularly relevant when 
only low-accuracy solutions are available, so that $\|R\|$ is much larger than working precision. 
\end{remark}
\begin{remark}[Effect of finite precision arithmetic]\label{rem:finite}
Crucial in the above argument is that $\Lh_1$ has zero off-diagonal elements. 
In practice in finite-precision arithmetic, 
the Rayleigh-Ritz process inevitably results in 
$\Lh_1$ in~\eqref{eq:Aef} with 
off-diagonal elements that are $O(u)$ instead of $0$, due to roundoff errors (assuming for simplicity $\|A\|=O(1)$). It is therefore important to address how they affect the bounds. As mentioned in the introduction, classical perturbation theory shows that these $O(u)$ terms will perturb $\xh $ by up to $O(u/\gap)$. Since the off-diagonal $O(u)$ elements in $\Lh_1$ indeed lie in the directions that perturb the eigenvector the most (we return to this in Section~\ref{sec:eigpert}), to account for roundoff errors we will need to add the term $O(u/\gap)$ to the bound~\eqref{eq:boundvec}. 
This remark becomes important especially when $\|R\|$ is small, 
so that $O(u/\gap)$ is not negligible relative to $\frac{\|R\|}{\Gap}$. 
In other words, the folklore that eigenvectors cannot be computed with precision higher than $u/\gap$ is true; 
what we refute is the belief that the bound $\|R\|/\gap$ 
(or $\|r\|/\gap$)
is sharp---our result shows that when $\|R\|> \gap$ but $\|R\|< \Gap$, 
Rayleigh-Ritz computes eigenvectors of much higher accuracy than $\|R\|/\gap$. 
\end{remark}

\begin{remark}[Different partitionings]\label{rem:partition}
  We can obtain different bounds depending on where to partition, that is, we can invoke the bound~\eqref{eq:boundvec} by taking a $k\leftarrow k'$ for some $k'\leq k$. 
Each choice of $k'$ gives a different bound, since each gives different values of $\|R\|$ and $\Gap$ (along with $\gap$, though its dependence on $k'$ is usually much less significant). If the computational cost is not a concern, one can compute all possible partitionings and take the smallest bound obtained. However, the bounds in Section~\ref{sec:eigsharp} are often still  better in practice. 
\end{remark}

\begin{remark}[Proof via generalized Davis-Kahan and Saad]\label{rem:dksaad}
 The result~\eqref{eq:boundvec} can also be derived by combining (i) Saad's bound~\cite[Thm.~4.6]{saadbookeig}, which bounds $\sin\angle(\xh_i,x_i)$ relative to $\sin\angle(Q,x_i)$, the angle  between the desired eigenvector and the trial subspace, and (ii) the generalized Davis-Kahan $\sin\theta$ theorem~\cite{daviskahan}, in which two subspaces of different dimensions are compared. 
 Here we presented a first-principles derivation, as we use the same line of arguments to derive improved and generalized bounds in the forthcoming sections. 
Also noteworthy is Knyazev's paper~\cite{knyazevmc97}, which
generalizes Saad's bound to subspaces. He also shows that Ritz vectors contain quadratically small components in eigenvectors approximated by the other Ritz vectors. 
This is essentially captured in~\eqref{eq:ybound1}, which indicates $\|y\|=O(\|R\|^2)$ (absorbing the gaps in the constant).  We revisit this phenomenon for subspaces in Section~\ref{sec:RRangles}. 
\end{remark}

While we will not repeat them, Remarks~\ref{rem:finite} and \ref{rem:partition} are relevant throughout the paper. 
\subsection{Experiments}\label{sec:exp}
To illustrate Theorem~\ref{thm:mainscalar},
 we conduct the following experiment; throughout, all experiments were carried out in MATLAB version R2017a using IEEE double precision arithmetic with unit roundoff $\approx 1.1\times 10^{-16}$. 
Let 
\[
A =
\begin{bmatrix}
\Lh_1
& R^*\\R & A_3
\end{bmatrix}\in\mathbb{R}^{n\times n}
\]
where $n=10$ (the precise size of $n$ is insignificant), 
$\Lh_1 = -\big[  \begin{smallmatrix}
1+\gap& \\ & 1    
  \end{smallmatrix}\big]$ and 
 $A_3\succeq 0$, so that $\Gap\geq 1$. 
We take $R\in\mathbb{R}^{k\times 2}$ to be randomly generated matrices using MATLAB's {\tt randn} function, scaled so that $\|R\|$ is fixed to a value $10^{-i}$, for $i=0,\ldots,15$. For each $i$, we generate 100 such matrices $R$, and 
find the largest value of $\sin\angle(x,\xh )$ from the 100 runs (note that $\xh=[1,0,\ldots,0]^T$ by construction). These are shown as 'observed' in Figure~\ref{fig:exp}, along with (i) the classical bound $\|R\|/\gap$, (ii) the new bound~\eqref{eq:boundvec}, 
(iii) the bound $u/\gap$, in view of Remark~\ref{rem:finite}. 

In view of Remark~\ref{rem:finite}, 
 $\sin\angle(x,\xh )$ is bounded by the maximum of~\eqref{eq:boundvec} and (a small multiple of) $u/\gap$. Of course, we always have the trivial bound $\sin\angle(x,\xh )\leq 1$, so putting these together, we have the following bound in finite-precision arithmetic:
\begin{equation}
  \label{eq:finalbound}
\sin\angle(x,\xh )\leq \min\left(1,\max\big(O(\frac{u}{\gap}),\frac{\|R\|}{\Gap}\sqrt{1+\frac{\|R\|^2}{\gap^2}}\big)\right).
\end{equation}

We observe in Figure~\ref{fig:exp} that this is indeed the case, and the 
new bound~\eqref{eq:boundvec} gives remarkably sharp bounds for the observed values of $\sin\angle(x,\xh )$ (when it is not dominated by $u/\gap$, and gives a nontrivial bound $\leq 1$). This is despite the fact that we are plotting the looser bound $\frac{\|R\|}{\Gap}\sqrt{1+\frac{\|R\|^2}{\gap^2}}$ with $\|R_2\|$ in~\eqref{eq:boundvec} replaced by $\|R\|$, as using $\|R_2\|$ makes the bound depend on the particular random instance of $R$. 

As discussed above, the new bound~\eqref{eq:boundvec} has two asymptotic behaviors: $\approx \|R\|/\Gap$ when $\|R\|\leq \gap$, 
and $\approx \|R\|^2/(\gap\cdot \Gap)$ when $\|R\|\geq \gap$. This can be seen in the plots, as the change of slope in the new bound around $\|R\|\approx \gap$. From the plots with  $\gap=10^{-3}$ and $10^{-5}$, we see that this transition also reflects the observed values of $\sin\angle(x,\xh )$ quite accurately. In all cases, the classical Davis-Kahan bound $\|R\|/\gap$ tends to be severe overestimates (and $\|r\|/\gap$ as in~\eqref{eq:classical} is not much different), and the new bound can provide nontrivial information (bound smaller than 1) even when the Davis-Kahan  bound is useless with $\|R\|/\gap>1$, and the difference between Davis-Kahan and the new bound widens when $\gap$ is small. 

\begin{figure}[htpb]
\hspace{-4mm}
  \begin{minipage}[t]{0.49\hsize}
      \includegraphics[width=.95\textwidth]{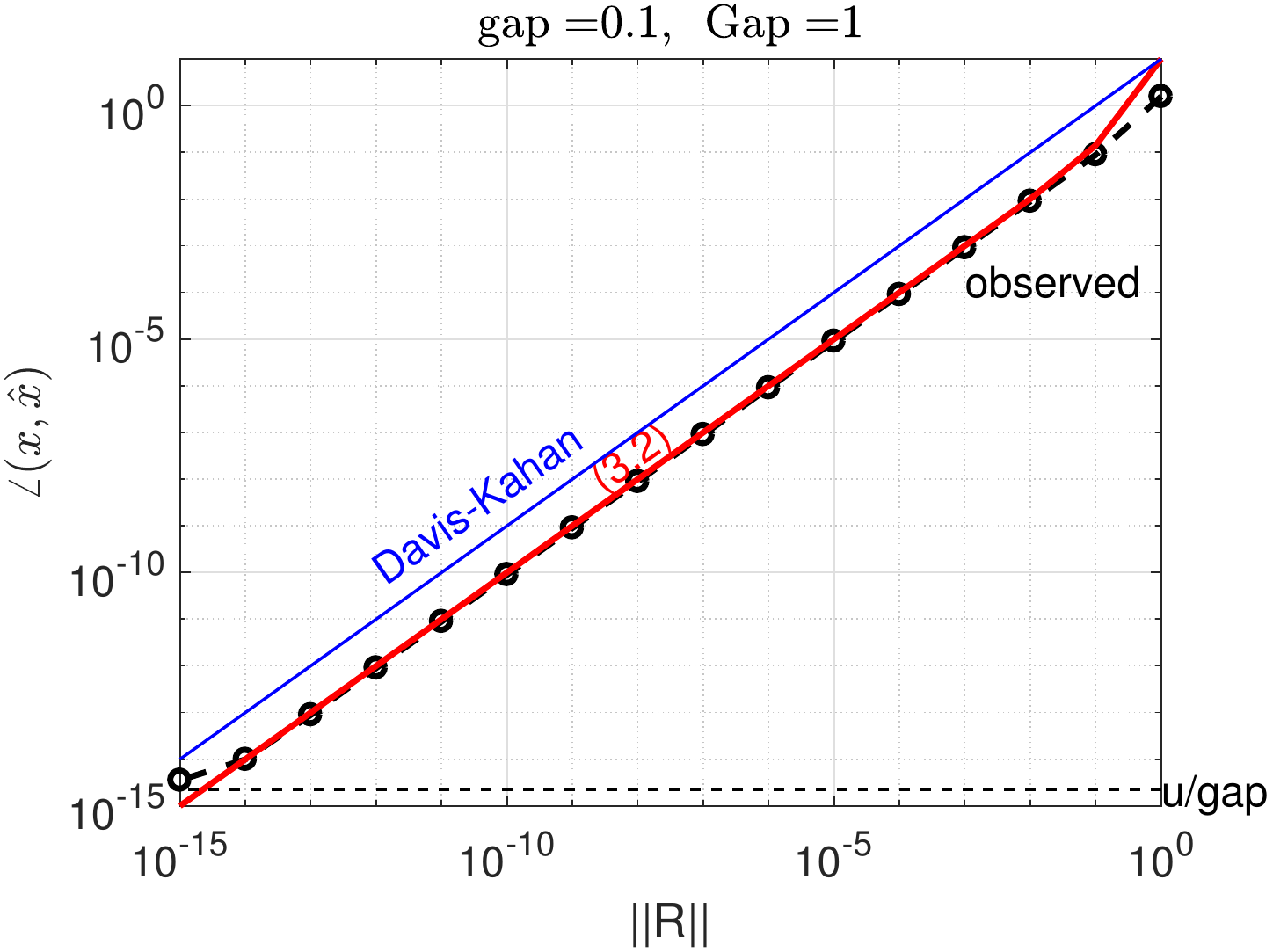}
  \end{minipage}   
  \begin{minipage}[t]{0.49\hsize}
      \includegraphics[width=.95\textwidth]{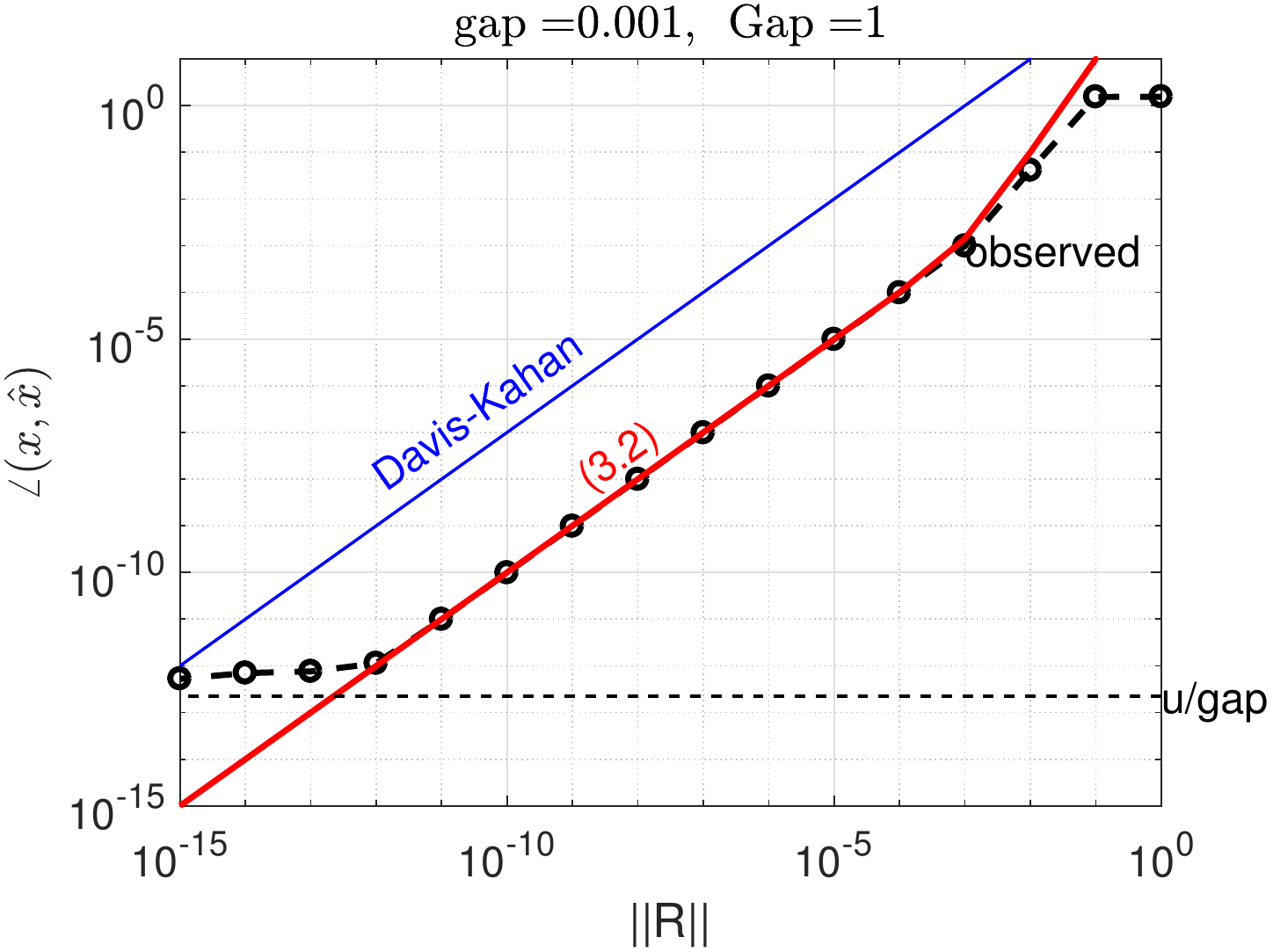}
  \end{minipage}\\
\vspace{2mm}

  \begin{minipage}[t]{0.49\hsize}
      \includegraphics[width=.95\textwidth]{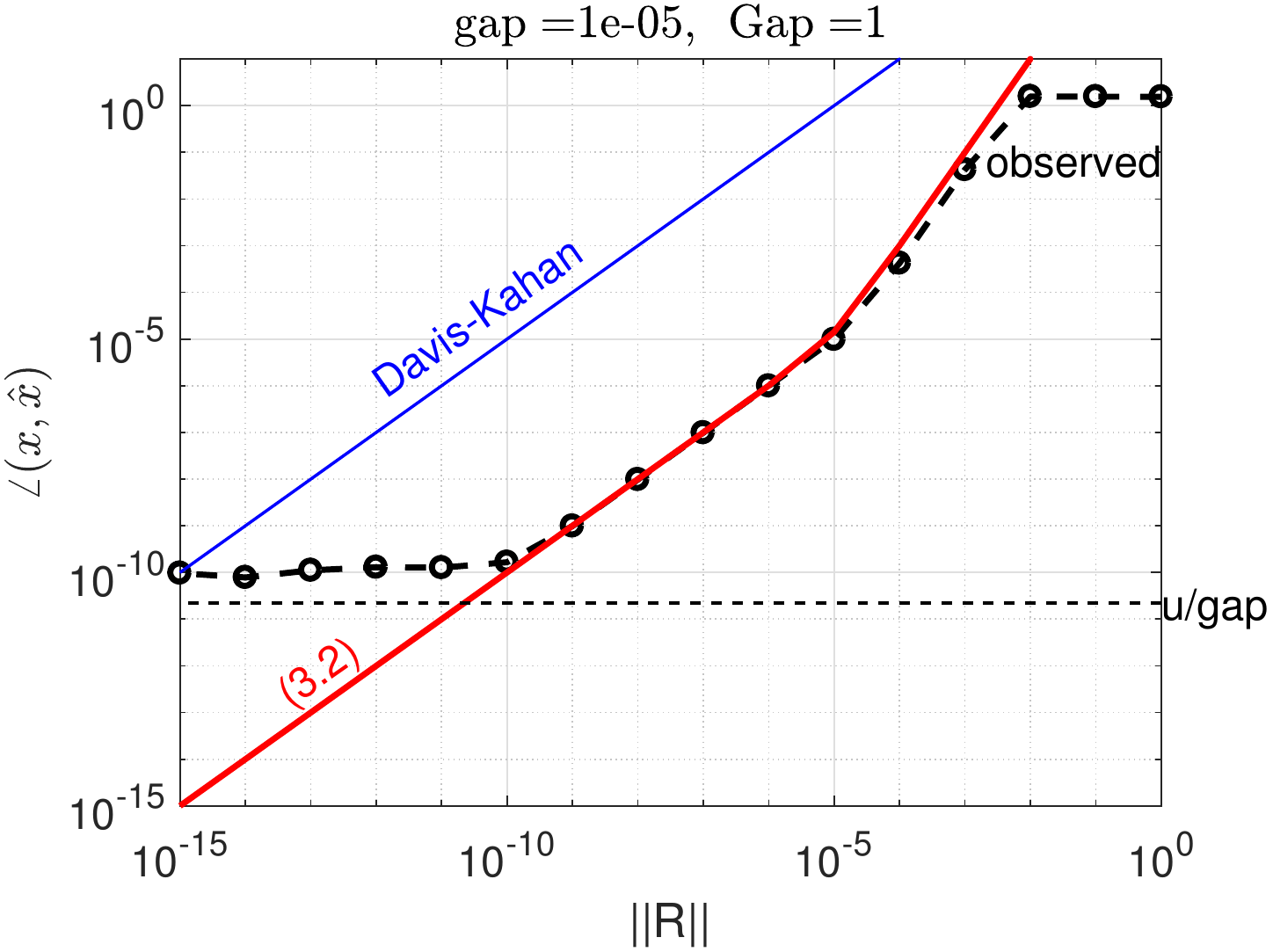}
  \end{minipage}
  \begin{minipage}[t]{0.49\hsize}
      \includegraphics[width=.95\textwidth]{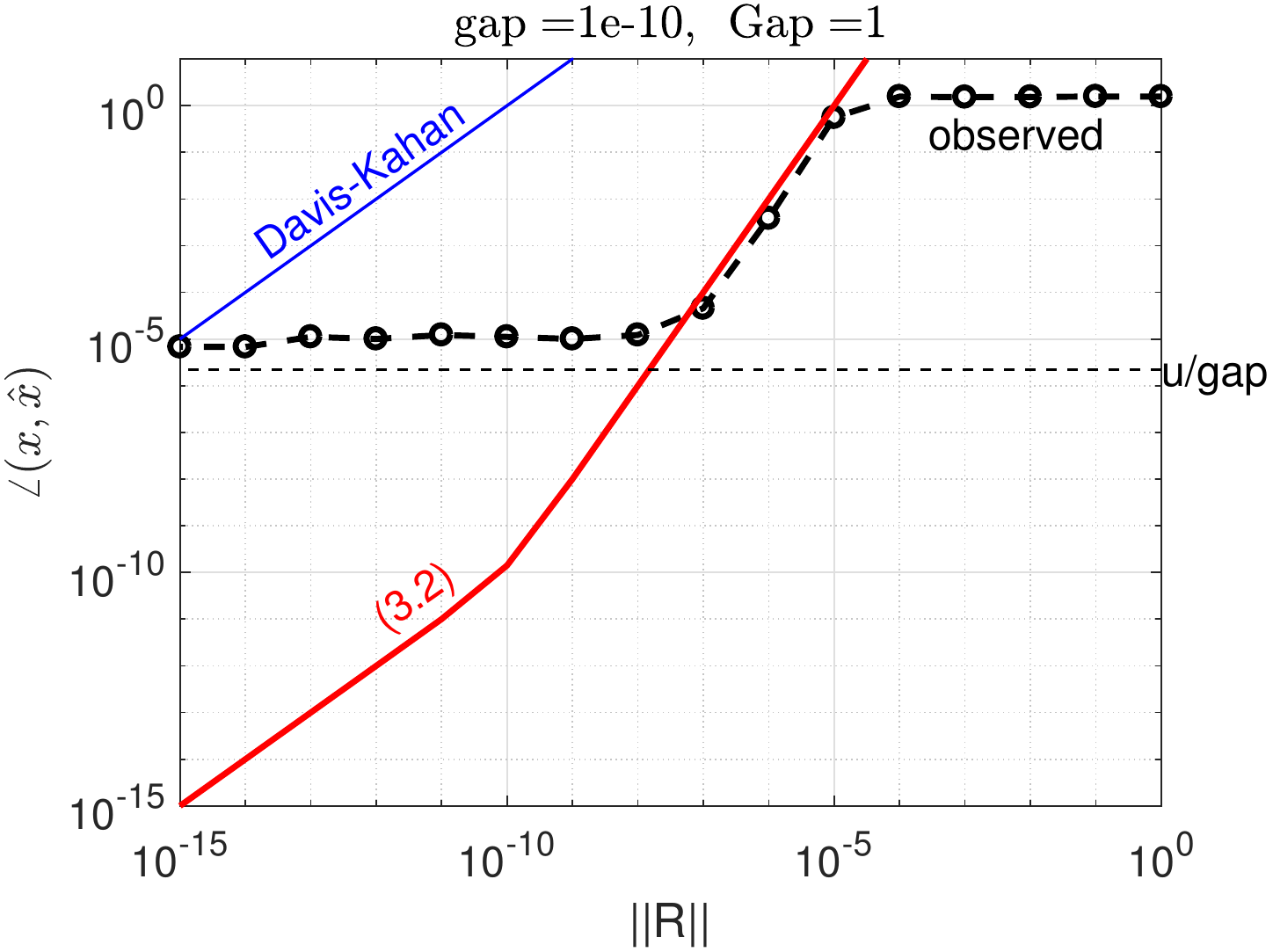}
  \end{minipage}
  \caption{Illustration of our bound~\eqref{eq:boundvec} (dashed red; with $R_2$ replaced by $R$), 
varying gap (upper-left: $\gap=10^{-1}$, upper-right: $\gap=10^{-3},$ 
lower-left: $\gap=10^{-5}$, lower-right: $\gap=10^{-10}$). 
Observe how sharp~\eqref{eq:boundvec} is, relative to the classical Davis-Kahan bound $\|R\|/\gap$. 
When $\|R\|\leq u/\gap$, 
the bound in finite-precision arithmetic would be the maximum between the new bound and $u/\gap$ (dashed black, constant line); see~\eqref{eq:finalbound}. 
}
  \label{fig:exp}
\end{figure}

\section{Improved error bounds for Ritz vectors}\label{sec:eigsharp}
\label{sec:improve}
The above experiments illustrate the sharpness of the bound~\eqref{eq:boundvec} given the information $\|R\|=\|[r_1,\ldots,r_k]\|$ and $\lh_i$, along with $\min(\mbox{eig}(A_3))$. 
When applied in practice, however, we find that the bound~\eqref{eq:boundvec} is usually a severe overestimate, as we illustrate in Section~\ref{sec:exp2}. The reason is that it does not distinguish $r_1$ from $r_k$ (say), while typically we have $\|r_1\|\ll \|r_k\|$, reflecting the difference in speed with which each Ritz pair converges, typically the extremal ones converging first. 

As noted in Section~\ref{sec:RR}, after R-R one also has information on the individual norms $\|r_i\|=\|A\xh_i-\lh_i\xh_i\|$. Here we derive bounds that are essentially sharp using all the information. 
We shall show that if $\|r_i\|$ are sufficiently small, 
then 
$\sin\angle(x,\xh )\lesssim \frac{\|r_1\|}{\Gap_1}$. This is 
usually a massive improvement over~\eqref{eq:boundvec}, and 
essentially sharp: we cannot improve the bound below $\frac{\|r_1\|}{\Gap_1}$. 
The argument is similar to Theorem~\ref{thm:mainscalar} but with more elaborate manipulations. The strategy is the same: bound $\|y\|$ in terms of $\|z\|$, and use this to bound 
$\left\|\big[
  \begin{smallmatrix}
    y\\z
  \end{smallmatrix}
\big]\right\|$. 

\begin{theorem}\label{thm:eigrefine}
In the setting of Theorem~\ref{thm:mainscalar}, 
\begin{itemize}
\item 
If $\Gap>\frac{\|\Rtt\|^2}{\gap}$, 
then 
\begin{equation}
  \label{eq:sin22}
\sin\angle(x,\xh ) 
\leq \frac{\|r_1\|}{\Gap-\frac{\|\Rtt\|^2}{\gap}}\sqrt{1+\frac{\|\Rtt\|^2}{\gap^2}}  . 
\end{equation}
\item If $\Gap>\sum_{i=2}^{k}\frac{\|r_{i}\|^2}{|\lambda  -\lh_{i}|}$, then 
\begin{equation}
  \label{eq:sin2indiv}
\sin\angle(x,\xh ) 
\leq 
\frac{\|r_1\|}{
\Gap-\sum_{i=2}^{k}\frac{\|r_{i}\|^2}{|\lambda  -\lh_{i}|}}
\sqrt{1+
\left(\sum_{i=2}^{k}\frac{\|r_{i}\|}{|\lambda  -\lh_{i}|}\right)^2}. 
\end{equation}
\end{itemize}
\end{theorem}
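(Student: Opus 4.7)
The plan is to follow the blueprint of Theorem~\ref{thm:mainscalar}'s proof, refining each step to exploit the explicit splitting $R=[r_1,\Rtt]$ and (for the second bullet) the diagonal structure of $\Lh_2$. As before, write $\xt=[w;y;z]$ for an eigenvector of $\At$ with eigenvalue $\lambda$, recall from~\eqref{eq:sinx1xh1} that $\sin\angle(x,\xh)=\sqrt{\|y\|^2+\|z\|^2}$, and expand the bottom block of $\At\xt=\lambda\xt$ as
\[
(\lambda I_{n-k}-A_3)z = w\,r_1 + \Rtt\, y.
\]
The essential new idea, relative to~\eqref{eq:dkspecial}, is to keep $r_1$ separate from $\Rtt$ rather than crudely bounding $\|R\|$.

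For~\eqref{eq:sin22}, apply $\|(\lambda I-A_3)^{-1}\|\le 1/\Gap$ and the triangle inequality (with $|w|\le 1$) to get the coupled estimate $\|z\|\le \tfrac{1}{\Gap}(\|r_1\|+\|\Rtt\|\|y\|)$. Substituting the middle-block bound $\|y\|\le\tfrac{\|\Rtt\|}{\gap}\|z\|$ from~\eqref{eq:ybound1} yields a self-referential inequality for $\|z\|$, and solving it under the hypothesis $\Gap>\|\Rtt\|^2/\gap$ gives
\[
\|z\|\le \frac{\|r_1\|}{\Gap-\|\Rtt\|^2/\gap}.
\]
Combining with $\sqrt{\|y\|^2+\|z\|^2}\le\|z\|\sqrt{1+\|\Rtt\|^2/\gap^2}$ produces~\eqref{eq:sin22}.

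For~\eqref{eq:sin2indiv}, I would sharpen both the coupling and the bound on $\|y\|$ by decoupling the middle block using the fact that $\Lh_2=\mathrm{diag}(\lh_2,\dots,\lh_k)$. The equation $(\lambda I_{k-1}-\Lh_2)y=\Rtt^* z$ then splits componentwise into $(\lambda-\lh_j)y_j=r_j^* z$, so $|y_j|\le \|r_j\|\|z\|/|\lambda-\lh_j|$ for each $j=2,\dots,k$. Plugging this into $\|\Rtt y\|\le\sum_{j=2}^k |y_j|\|r_j\|$ gives the refined coupling
\[
\|z\|\le \frac{\|r_1\|}{\Gap}+\frac{\|z\|}{\Gap}\sum_{j=2}^k \frac{\|r_j\|^2}{|\lambda-\lh_j|},
\]
which rearranges (under the hypothesis of the second bullet) to the denominator in~\eqref{eq:sin2indiv}. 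The square-root factor then follows from the componentwise-to-$\ell_1$ bound $\|y\|\le\sum_{j=2}^k|y_j|\le\|z\|\sum_{j=2}^k\|r_j\|/|\lambda-\lh_j|$ combined with $\sqrt{\|y\|^2+\|z\|^2}=\|z\|\sqrt{1+\|y\|^2/\|z\|^2}$.

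The only delicate point is the algebra of the self-referential inequalities: one must verify that the hypothesis in each bullet guarantees the denominator $\Gap-\|\Rtt\|^2/\gap$ (respectively $\Gap-\sum_j\|r_j\|^2/|\lambda-\lh_j|$) is strictly positive before dividing through. Everything else is essentially the same bookkeeping as in Theorem~\ref{thm:mainscalar}, with the extra care that the graded residuals $\|r_1\|\ll\|r_k\|$ enter asymmetrically — it is precisely this asymmetry that makes~\eqref{eq:sin2indiv} much sharper than~\eqref{eq:boundvec} when the Ritz pairs have converged to differing accuracies, as claimed in the discussion preceding the theorem.
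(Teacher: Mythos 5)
Your proposal is correct and follows essentially the same route as the paper's proof: split $R=[r_1,\Rtt]$ in the bottom block, bound $\|y\|$ (in norm for~\eqref{eq:sin22}, componentwise for~\eqref{eq:sin2indiv}) in terms of $\|z\|$ via the middle block, substitute back to obtain and solve the self-referential inequality for $\|z\|$, and assemble $\sin\angle(x,\xh)=\sqrt{\|y\|^2+\|z\|^2}$. The only cosmetic difference is that you invert $(\lambda I-A_3)$ and use $\|(\lambda I-A_3)^{-1}\|\le 1/\Gap$ where the paper applies the reverse triangle inequality together with $\sigma_{\min}(\lambda I-A_3)=\Gap$; these are equivalent.
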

\begin{proof}

We first prove~\eqref{eq:sin22}. 
The main idea is to improve the bound~\eqref{eq:dkspecial} on $\|z\|$. As before we have 
$
(\lambda I_{k-1} -\Lh_{2})y = \Rtt^*z,$
so 
$\|y\|\leq \frac{\|\Rtt\|\|z\|}{\gap}$. We also have 
\[
(\lambda I_{n-k}-A_3) z =[r_1\ \Rtt]
\begin{bmatrix}
  w\\y
\end{bmatrix}. 
\]
This gives $(\lambda I_{n-k}-A_3) z -\Rtt y=r_1w$, hence 
 $\|(\lambda I_{n-k}-A_3) z\| -\|\Rtt y\|\leq \|r_1w\|$. 
Using $\|y\|\leq \frac{\|\Rtt\|\|z\|}{\gap}$ we obtain 
\[
\|(\lambda I_{n-k}-A_3) z\| -
\frac{\|\Rtt\|^2\|z\|}{\gap}
\leq \|r_1w\|. 
\]
Noting that 
 $\sigma_{\min}(\lambda I_{n-k}-A_3)=\Gap$, 
we have $\|(\lambda I_{n-k}-A_3) z\|\geq \Gap \|z\|$, hence 
\[
(\Gap-\frac{\|\Rtt\|^2}{\gap})\|z\| \leq \|r_1w\|. 
\]
Using the assumption $\Gap>\frac{\|\Rtt\|^2}{\gap}$ and the trivial bound $\|w\|\leq 1$ we obtain
\[
\|z\| \leq \frac{\|r_1\|}{\Gap-\frac{\|\Rtt\|^2}{\gap}}. 
\]
This 
together with $\|y\|\leq \frac{\|\Rtt\|\|z\|}{\gap}$ yields 
\[
\sin\angle(x,\xh ) = 
\left\|
\begin{bmatrix}
y\\z
\end{bmatrix}
\right\|
\leq \frac{\|r_1\|}{\Gap-\frac{\|\Rtt\|^2}{\gap}}\sqrt{1+\frac{\|\Rtt\|^2}{\gap^2}}, 
\]
giving~\eqref{eq:sin22}. 

The remaining task is to prove~\eqref{eq:sin2indiv}. 
The idea to improve the bound~\eqref{eq:ybound1} on $\|y\|$, or rather its individual entries, using 
\[
(\lambda I_{k-1} -\Lh_{2})y = \Rtt^*z. 
\]
Writing $y=[y_2,\ldots,y_k]^T$, 
the $i$th ($i=2,\ldots,k$) element gives
$(\lambda  -\lh_{i})y_{i} = r_{i}^*z, $
hence 
\begin{equation}  \label{eq:ithy}
|y_i|= \frac{|r_{i}^*z|}{|\lambda  -\lh_{i}|} \leq 
\frac{\|r_{i}\|\|z\|}{|\lambda  -\lh_{i}|},\quad i=2,\ldots, k. 
\end{equation}
We also have 
\[
(\lambda I_{n-k}-A_3) z =[r_1,r_2,\ldots,r_k]
\begin{bmatrix}
  w\\y_2\\\vdots\\y_{k}
\end{bmatrix}. 
\]
This gives $(\lambda I_{n-k}-A_3) z -\Rtt y=r_1w$, and 
\begin{equation}
  \label{eq:rindiv}
\|(\lambda I_{n-k}-A_3) z\| -\sum_{i=2}^{k}\|r_{i}y_i\|\leq \|r_1w\|, 
\end{equation}
so using~\eqref{eq:ithy}
we obtain 
\[\|(\lambda I_{n-k}-A_3) z\| -
\sum_{i=2}^{k}\frac{\|r_{i}\|^2\|z\|}{|\lambda  -\lh_{i}|}
\leq \|r_1w\|  .\]
Again using $\|(\lambda I_{n-k}-A_3) z\|\geq\Gap\|z\|$, 
we therefore obtain 
\[
(\Gap-
\sum_{i=2}^{k}\frac{\|r_{i}\|^2}{|\lambda  -\lh_{i}|}
)\|z\| \leq \|r_1w\|. 
\]
Hence, using the assumption $\Gap>
\sum_{i=2}^{k}\frac{\|r_{i}\|^2}{|\lambda  -\lh_{i}|}$ and the trivial bound $\|w\|\leq 1$ we obtain
\[
\|z\| \leq \frac{\|r_1\|}{
\Gap-\sum_{i=2}^{k}\frac{\|r_{i}\|^2}{|\lambda  -\lh_{i}|}}.
\]
The fact $\sin\angle(x,\xh ) = 
\left\|
\begin{bmatrix}
y\\z
\end{bmatrix}
\right\|
$ together with~\eqref{eq:ithy} 
 completes the proof of \eqref{eq:sin2indiv}. 
\end{proof}

Note that since the bounds $\|y\|\leq \frac{\|\Rtt\|\|z\|}{\gap}$ and~\eqref{eq:ithy} are both valid, in both bounds~\eqref{eq:sin22} and~\eqref{eq:sin2indiv}, the term with the square root can be replaced with the minimum, that is, 
$\sqrt{1+\min\bigg(\frac{\|\Rtt\|^2}{\gap^2},\big(\sum_{i=2}^{k}\frac{\|r_{i}\|}{|\lambda  -\lh_{i}|}\big)^2\bigg)}$. This applies also to the bounds to follow, but for brevity we do not repeat this remark. 

We also note that the bounds~\eqref{eq:sin22} and \eqref{eq:sin2indiv} are not comparable. The bound \eqref{eq:sin22} involves the small $\gap$, which \eqref{eq:sin2indiv} avoids to some extent by using the individual residuals $\|r_i\|$; however, the heavy use of triangular inequalities in the bound \eqref{eq:rindiv} suggests \eqref{eq:sin22} can still be a significant overestimate. 
The ``sharpest'' bound one can obtain would be via directly bounding the norm $\|y\| = \|(\lambda I_{k-1} -\Lh_{2})^{-1}\Rtt^*z\|$. Nonetheless, experiments suggest~\eqref{eq:sin2indiv} is often a good bound, as we illustrate now. 


\subsection{Experiments}\label{sec:exp2}
We illustrate Theorem~\ref{thm:eigrefine} with experiments more practical than Section~\ref{sec:exp}. We let $A\in\mathbb{R}^{1000\times 1000}$ be the classical tridiagonal matrix with $2$ on the diagonal and -1 on the super- and sub-diagonals. This is a 1D Laplacian matrix, obtained by finite difference discretization. We then run the LOBPCG algorithm~\cite{lobpcg} to compute the smallest eigenpair with a random initial guess, working with a $k=50$-dimensional subspace. 

Figure~\ref{fig:lob} (left) shows the convergence of $\sin\angle(\xh_1,x_1)$ along with four bounds: Davis-Kahan's $\sin\angle(\xh_1,x_1)\leq \frac{\|r_1\|}{\gap}$, \eqref{eq:sin22}, \eqref{eq:sin2indiv} and \eqref{eq:boundvec} from the previous section. Some data are missing for 
\eqref{eq:sin22} and \eqref{eq:sin2indiv} in the early steps as they violated the assumption $\Gap>\frac{\|\Rtt\|^2}{\gap}$ or 
$\Gap>\sum_{i=2}^{k}\frac{\|r_{i}\|^2}{|\lambda  -\lh_{i}|}$; note that these assumptions can be checked inexpensively. 
To estimate $\Gap$ and $\gap$ 
we used the available quantities $\Gap \gtrsim \min|\lh_1-\lh_{k}|$,
$\gap \approx \min|\lh_1-\lh_{2}|$; the plots look nearly identical if the exact values are used. 

\begin{figure}[htbp]
  \begin{minipage}[t]{0.49\hsize}
\includegraphics[width=0.95\textwidth]{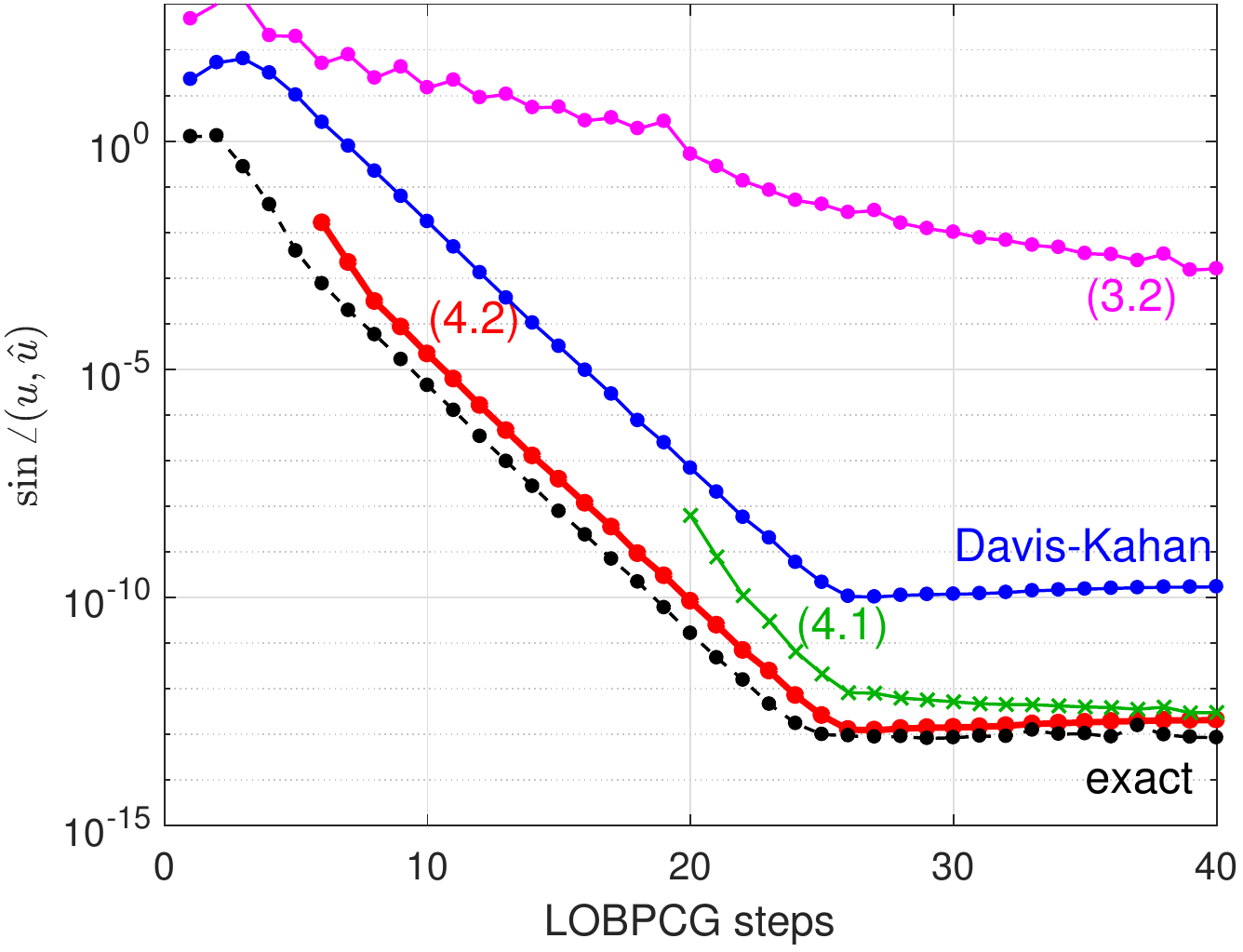}      
  \end{minipage}   
  \begin{minipage}[t]{0.49\hsize}
\includegraphics[width=0.99\textwidth]{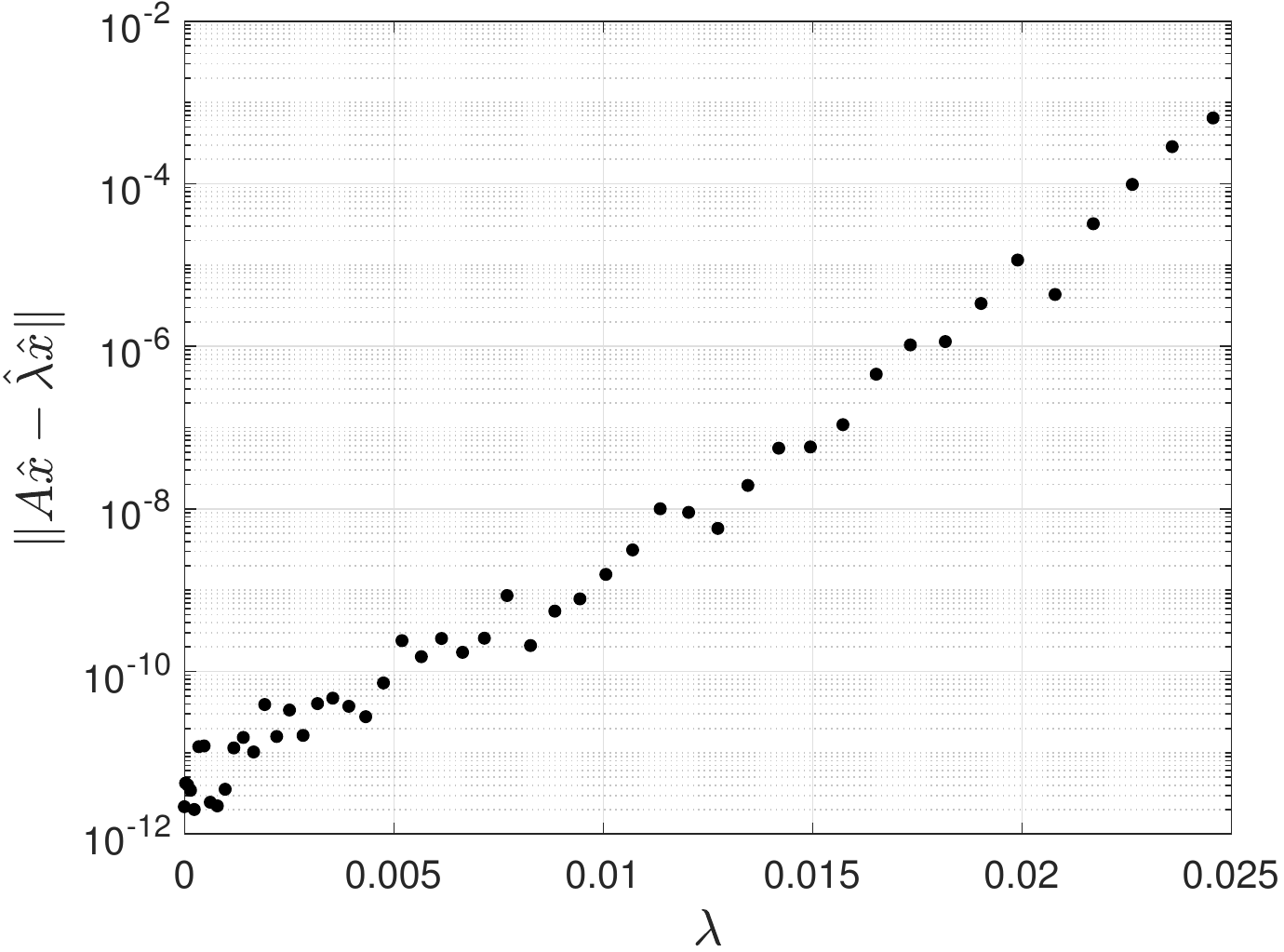}      
  \end{minipage}   
  \caption{Left: convergence of $\sin\angle(\xh_1,x_1)$ 
 (shown as exact), and its bounds~\eqref{eq:sin22}, \eqref{eq:sin2indiv}
and the Davis-Kahan bound~\eqref{eq:classical}. 
Right: scatterplot of $\lambda$ vs. residuals $\|r_i\|=\|A\xh_i-\lh_i\xh_i\|$ 
for $i=1,2,\ldots,k=50$, 
after 20 LOBPCG iterations. Note how $\|r_i\|$ are graded $\|r_i\|\ll \|r_j\|$ for $i\ll j$. 
}
  \label{fig:lob}
\end{figure}

We make several observations. 
First, \eqref{eq:sin2indiv} gave sharp bounds for 
$\sin\angle(\xh_1,x_1)$ when applicable. 
For example after eight LOBPCG iterations, Davis-Kahan's $\sin\theta$ theorem gives bounds $>1$, suggesting $\xh_1$ may have no accuracy at all. Nonetheless, \eqref{eq:sin2indiv} correctly shows that it has at least accuracy $\lesssim 10^{-3}$. 
Second, the bound \eqref{eq:boundvec} is poor throughout, because it takes the entire residual matrix norm $\|R\|$ in the numerator, without respecting the fact that the residuals $\|r_i\|=\|A\xh_i-\lh_i\xh_i\|$ are typically graded and hence $\|r_1\|\ll \|R\|$, as illustrated in Figure~\ref{fig:lob} (right). 
Finally, the asymptotic behavior of the bounds as $\|R\|\rightarrow 0$ (many LOBPCG steps) are also in stark contrast. This is because up to first order in $\|R\|$, \eqref{eq:sin22} and \eqref{eq:sin2indiv} are  $\frac{\|r_1\|}{\Gap}$, whereas Davis-Kahan involves the smaller gap $\frac{\|r_1\|}{\gap}$. 

\subsection{Structured condition number}\label{sec:eigpert}
Here we interpret Theorem~\ref{thm:eigrefine} from the standpoint of perturbation theory. Namely, 
we regard $R$ in~\eqref{eq:At} as a perturbation to the block diagonal matrix 
$\At_0:=\mbox{diag}(\lh_1,\ldots,\lh_k,A_3)$
having eigenvectors $e_1,\ldots,e_k$ (besides others), the first $k$ canonical vectors. Examining $\angle(x_i,\xh_i)$ is equivalent to examining how much the eigenvector $e_i$ of $\At_0$ gets perturbed by $R$. 

In the opening we mentioned that the condition number of an eigenvector is $1/\gap_i$. That is, there exists a perturbation $E$ such that $\At_0+E$ has an eigenvector $\widehat e_i$ with 
\begin{equation}  \label{eq:eihatei}
\sin\angle(e_i,\widehat e_i)=\frac{\|E\|}{\gap_i} + O(\|E\|^2).   
\end{equation}
 Yet, the two bounds in Theorem~\ref{thm:eigrefine} show that writing $R:=\At-\At_0$ (slightly and harmlessly abusing notation), we have 
 \begin{equation}   \label{eq:eihateiGap}
\sin\angle(e_i,\widehat e_i)=\frac{\|r_i\|}{\Gap_i} + O(\|R\|^2)   . 
 \end{equation} 
Note the two changes, both potentially significant: first, $\gap$ is replaced by $\Gap$. Second, the norm of the entire perturbation $\|E\|$ is replaced by the individual $\|r_i\|$, the perturbation only in the $i$th column of $\At_0$. 

An explanation of this effect can be made via structured perturbation analysis. In R-R, the perturbation $R$ in \eqref{eq:At} is highly structured in two ways: the nonzero pattern, and the grading of $\|r_i\|$. For example, the perturbation $E$ that would perturb the eigenvector $e_1$ the most is the $(1,2)$ and $(2,1)$ elements in $\At_0$, as they connect the eigenvalues $\lambda_1$ and $\lambda_2$, resulting in the (unstructured)  condition number $1/|\lambda_1-\lambda_2|=1/\gap_1$. 
However, these are forced to be zero by the R-R construction. 
Within the structured perturbation allowed in R-R, $e_1$ is perturbed most by the $(k+1,1)$ and $(1,k+1)$ elements, assuming for the moment $A_3$ is diagonalized. These elements connect the eigenvalues $\lambda_1$ and $\min(\lambda(A_3))\approx \lambda_{k+1}$, resulting in the structured condition number $1/\Gap_1$. 
Regarding the grading of $\|r_i\|$, the $r_j$ ($j\neq i$) terms have no effect on $\widehat e_i$ up to $O(\|r_j\|^2)$, making $r_i$ the only term that affects the leading term in~\eqref{eq:eihateiGap}. 

\section{Bounds for invariant subspaces}\label{sec:RRangles}
We now turn to bounding errors for invariant subspaces spanned by more than one eigenvector. Besides being the natural object in many applications, it is sometimes necessary to resort to subspaces instead of individual eigenvectors, when multiple or near-multiple eigenvalues are present. For example, if $\gap=O(u)$, none of the above bounds would be useful, as the $O(\frac{u}{\gap})$ term in~\eqref{eq:finalbound} due to roundoff errors is always present. Below we derive bounds that give useful information in such cases. 

We briefly recall the definition of angles between subspaces. The angles $\{\theta_i\}_{i=1}^{k_1}$ between two subspaces spanned by $X\in\mathbb{C}^{n\times k_1},Y\in\mathbb{C}^{n\times k_1}$ 
with orthonormal columns are defined by $\theta_i=\mbox{acos}(\sigma_i(X^*Y))$ and denoted by $\angle(X,Y)$; they are known as the canonical angles or principal angles~\cite[Thm.~6.4.3]{golubbook4th}. Equivalently, we have $\sin\theta_i=\sigma_i(X_\perp^*Y)$ (as can be verified e.g. via the CS decomposition~\cite[Thm.~2.5.2]{golubbook4th}), which is what we use below (and used above for $k_1=1$ to obtain~\eqref{eq:sinx1xh1}). 

To clarify the situation, rewrite~\eqref{eq:At} as
\begin{equation}  \label{reftantheta}
\widetilde A:=
[\Xh_{1}\ \Xh_{2}\ \Xh_3]^\ast A[\Xh_{1}\ \Xh_{2}\ \Xh_3]=
\begin{bmatrix}
\Lh_1&0& R^\ast _1\\
0&\Lh_2& R^\ast _2\\
 R_1& R_2& A_3
\end{bmatrix}, 
\end{equation}
where $[\Xh_{1}\ \Xh_{2}\ \Xh_3]$ is an orthogonal matrix, with 
$[\Xh_{1}\ \Xh_{2}] = Q\Omega$, 
$\Xh_{1}\in\mathbb{C}^{n\times k_1}, \Xh_{2}\in\mathbb{C}^{n\times (k-k_1)}, $ and $\Xh_3\in\mathbb{C}^{n\times (n-k)}$. Our goal is to bound $\unii{\sin\angle(\Xh_1,X_1)}$ from above, where $X_1\in\mathbb{C}^{n\times k_1}$ is a matrix of $k_1$ exact eigenvectors of $A$, i.e., $AX_1=X_1\Lambda_1$. 
Defining $\Xt_1=[\Xh_{1}\ \Xh_{2}\ \Xh_3]^*X_1$, 
we have $\At \Xt_1=\Xt_1\Lambda_1$, so the columns of $\Xt_1$ are eigenvectors of $\At$. 
With the partitioning $\Xt_1 =
\begin{bmatrix}
W\\
Y\\
Z
\end{bmatrix}$ with $W\in\mathbb{C}^{k_1\times k_1}, 
Y\in\mathbb{C}^{(k-k_1)\times k_1}, 
Z\in\mathbb{C}^{(n-k)\times k_1}$, 
it therefore follows that 
$\uniinv{\sin\angle(\Xh_1,X_1)}=
\uniinv{[\Xh_2\ \Xh_3]^*X_1}=
\uniinv{
\begin{bmatrix}
Y\\
Z
\end{bmatrix}}$. This extends~\eqref{eq:sinx1xh1}, and is a key identity in the forthcoming analysis.

Sometimes we deal with the angles between subspaces of different dimensions, say $[\Xh_1\ \Xh_2]\in\mathbb{C}^{n\times k}$ and $X_1\in\mathbb{C}^{n\times k_1}$ with $k_1\leq k$. In this case the angles are defined via $\sin\theta_i=\sigma_i(X_1^*([\Xh_1\ \Xh_2]_\perp))$ for $i=1,\ldots,k_1$. 

Here is the extension of the previous bounds to invariant subspaces. 
Note that $\gap$ and $\Gap$ are redefined; we use the same notation as they reduce to the same values when $k_1=1$. 

\begin{theorem}\label{thm:mainsubspace}
Let $A,\At$ be as in~\eqref{reftantheta}, 
with $(\Lh_{1},\Xh_1)$ being $k_1$ Ritz pairs. 
Let $(\Lambda_1,X_1)$ be a set of $k_1$ exact eigenpairs $AX_1=X_1\Lambda_1$. 
Let $\Gap=\min|\lambda(\Lambda_1)-\lambda(A_3)|$ and 
$\gap=\min|\lambda(\Lambda_1)-\lambda(\Lh_2)|$. 
Then writing 
$R=[R_1\ R_2]:=[R_1\ r_{k_1+1},\ldots,r_{k}]\in\mathbb{C}^{n\times k}$
 where $R_1\in\mathbb{C}^{n\times k_1}$, we have 
\begin{equation}  \label{eq:boundvecsubspace}
\uniinv{\sin\angle(X,\Xh)}
\leq  \frac{\uniinv{R}}{\Gap}(1+\frac{\|R_2\|}{\gap}), 
\qquad
\|\sin\angle(X,\Xh)\|_{2,F}\leq 
\frac{\|R\|_{2,F}}{\Gap}\sqrt{1+\frac{\|R_2\|^2}{\gap^2}}. 
\end{equation}
Moreover, if $\Gap>\frac{\|R_2\|^2}{\gap}$ then 
\begin{align}
\left\|\sin\angle(X,\Xh)\right\|_{2,F}   \label{eq:sin22subF}&\leq \frac{\|R_1\|_{2,F}}{\Gap-\frac{\|R_2\|^2}{\gap}}\sqrt{1+\frac{\|R_2\|^2}{\gap^2}},     
\end{align}
and if $\Gap>\sum_{i=k_1+1}^{k}\frac{\|r_{i}\|^2}{\min|\lambda(\Lambda_1)  -\lh_{i}|}
$ then 
\begin{equation}
  \label{eq:sin22subindF}
\left\|\sin\angle(X,\Xh)\right\|_{2,F}
\leq \frac{\|R_1\|_{2,F}}{
\Gap-\sum_{i=k_1+1}^{k}\frac{\|r_{i}\|_{2}^2}{\min|\lambda(\Lambda_1)  -\lh_{i}|}
}\sqrt{1+
\left(\sum_{i=k_1+1}^{k}\frac{\|r_{i}\|}{\min|\lambda(\Lambda_1) -\lh_{i}|}\right)^2}.
\end{equation}
\end{theorem}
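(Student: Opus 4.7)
The plan is to mimic the scalar arguments of Theorems~\ref{thm:mainscalar} and~\ref{thm:eigrefine}, working now with the matrix blocks $W,Y,Z$ of $\Xt_1=[\Xh_1\ \Xh_2\ \Xh_3]^*X_1$. Since $\Xt_1$ has orthonormal columns, all three blocks satisfy $\|W\|\le 1$, $\|Y\|\le 1$, $\|Z\|\le 1$. Expanding $\At\Xt_1=\Xt_1\Lambda_1$ block-wise using~\eqref{reftantheta}, the middle and bottom rows yield the Sylvester equations
\begin{equation*}
\Lh_2 Y - Y\Lambda_1 = -R_2^* Z,\qquad A_3 Z - Z\Lambda_1 = -R_1 W - R_2 Y,
\end{equation*}
and the key identity $\uniinv{\sin\angle(\Xh_1,X_1)}=\uniinv{\left[\begin{smallmatrix}Y\\ Z\end{smallmatrix}\right]}$ reduces the problem to bounding $Y$ and $Z$.

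For the first bound in~\eqref{eq:boundvecsubspace}, I would first apply the standard Davis--Kahan Sylvester bound (using $\lambda(\Lh_2)$ and $\lambda(\Lambda_1)$ separated by $\gap$) to the middle equation to obtain $\uniinv{Y}\le \|R_2\|\uniinv{Z}/\gap$, and apply it to the bottom equation (with separation $\Gap$) to get $\uniinv{Z}\le \uniinv{R\left[\begin{smallmatrix}W\\ Y\end{smallmatrix}\right]}/\Gap\le \uniinv{R}/\Gap$, since $\|\left[\begin{smallmatrix}W\\ Y\end{smallmatrix}\right]\|\le 1$. The triangle inequality $\uniinv{\left[\begin{smallmatrix}Y\\ Z\end{smallmatrix}\right]}\le \uniinv{Y}+\uniinv{Z}$ then gives the unitarily invariant bound. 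For the $\|\cdot\|_{2,F}$ bound, I use the sharper Pythagorean identity $\|\left[\begin{smallmatrix}Y\\ Z\end{smallmatrix}\right]\|_{2,F}^2\le \|Y\|_{2,F}^2+\|Z\|_{2,F}^2\le (1+\|R_2\|^2/\gap^2)\|Z\|_{2,F}^2$.

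To obtain~\eqref{eq:sin22subF}, I would refine the bound on $\|Z\|_{2,F}$ exactly as in Theorem~\ref{thm:eigrefine}: rewrite the bottom equation as $(A_3Z-Z\Lambda_1)+R_2Y=-R_1W$, take norms with the reverse triangle inequality, then substitute $\|R_2 Y\|_{2,F}\le \|R_2\|\|Y\|_{2,F}\le \|R_2\|^2\|Z\|_{2,F}/\gap$. This yields $(\Gap-\|R_2\|^2/\gap)\|Z\|_{2,F}\le \|R_1W\|_{2,F}\le \|R_1\|_{2,F}$, and combining with the $Y$-bound gives the stated inequality. For the entrywise refinement~\eqref{eq:sin22subindF}, the row-wise splitting of the middle Sylvester equation shows that the $j$th row of $Y$ satisfies $\|Y[j,:]\|_2\le \|r_{k_1+j}\|\|Z\|_2/g_j$ with $g_j=\min|\lambda(\Lambda_1)-\lh_{k_1+j}|$. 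Summing over $j$ gives $\|R_2 Y\|_{2,F}\le \|Z\|_{2,F}\sum_j\|r_{k_1+j}\|^2/g_j$ (via $\sum_j\|r_{k_1+j}\|\|Y[j,:]\|_2$ for spectral and Frobenius alike), and $\|Y\|_F^2\le \|Z\|_2^2\sum_j\|r_{k_1+j}\|^2/g_j^2\le \|Z\|_{2,F}^2(\sum_j\|r_{k_1+j}\|/g_j)^2$. Substituting back into the reverse triangle inequality for the bottom equation isolates $\|Z\|_{2,F}$ and produces~\eqref{eq:sin22subindF}.

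The main obstacle is bookkeeping: ensuring each inequality of the form $\uniinv{AB}\le \uniinv{A}\|B\|$ or the mixed spectral/Frobenius estimates (e.g.\ $\|r^*Z\|_2\le \|r\|\|Z\|_2\le \|r\|\|Z\|_F$) is applied in the correct direction so that the final bounds simultaneously hold for $\|\cdot\|_2$ and $\|\cdot\|_F$ (but need not hold for general unitarily invariant norms, since Pythagoras $\|[Y;Z]\|^2\le\|Y\|^2+\|Z\|^2$ is tight only in these two norms). The Sylvester-equation bound $\uniinv{Z}\le \uniinv{C}/\delta$ for $A_3Z-Z\Lambda_1=C$ with Hermitian $A_3,\Lambda_1$ whose spectra are separated by $\delta$ is the only nontrivial fact invoked, and it is classical.
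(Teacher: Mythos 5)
Your proposal is correct and follows essentially the same route as the paper's proof: the same block Sylvester equations for $Y$ and $Z$, the same Sylvester-equation separation bound, the reverse triangle inequality to refine the $Z$ estimate, and the row-wise splitting of the middle equation for the individual-residual bound, with the Pythagorean inequality reserved for the $\|\cdot\|_{2,F}$ versions exactly as in the paper. No gaps to report.
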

\begin{proof}
The proof mimics that of Theorem~\ref{thm:eigrefine}, extending the discussion from vectors to subspaces. 
Let 
$\Xt_1 = 
[\Xh_1,\Xh_2,\Xh_3]X_1=
\begin{bmatrix}
W\\  Y\\Z
\end{bmatrix}\in\mathbb{C}^{n\times k_1}$ be 
an invariant subspace of $\At$  such that 
\begin{equation}
  \label{eq:Atblock}
\At
\begin{bmatrix}
 W\\ Y\\Z
\end{bmatrix}=
\begin{bmatrix}
 W\\ Y\\Z
\end{bmatrix}\Lambda_1.   
\end{equation}

Then the bottom part of the equation gives 
\begin{equation}
  \label{eq:Zeqn}
Z\Lambda_1 -A_3 Z =[R_1\ R_2]
\begin{bmatrix}
  W\\Y
\end{bmatrix}=R\begin{bmatrix}
  W\\Y
\end{bmatrix}.    
\end{equation}
Using a well-known bound for Sylvester's equations~(e.g.~\cite[Lem.~2]{li2015convergence}, \cite[Ch.~V]{stewart-sun:1990}), 
along with the fact $\min(\lambda(\Lambda_1)-\lambda(A_3))=\Gap$, 
we obtain 
\begin{equation}  \label{eq:dkspecialsub}
\uniinv{Z}
\leq \frac{\uniinv{R}\left\|\begin{bmatrix}  W\\Y\end{bmatrix}\right\|}{\Gap  }
\leq \frac{\uniinv{R}}{\Gap}, 
\end{equation}
where for the last inequality we used the fact $\uniinv{XY}\leq \uniinv{X}\|Y\|$~\cite[Cor. 3.5.10]{hornjohntopics}. 
As in~\eqref{eq:dkspecial}, this is the generalized Davis-Kahan $\sin\theta$ theorem. 

From the second block of~\eqref{eq:Atblock} we have 
\begin{equation}\label{eq:Ylam}
Y\Lambda_1 -\Lh_{2}Y = R_2^*Z,  
\end{equation}
hence again from the Sylvester equation bound
\begin{equation}
  \label{eq:Ynrmuniinv}
\uniinv{Y}
\leq \frac{\|R_2\|\uniinv{Z}}{\gap}.  
\end{equation}
Together with~\eqref{eq:dkspecialsub} we obtain 
\begin{equation}
  \label{eq:y2knyasub}
\uniinv{Y}
\leq \frac{\uniinv{R}\|R_2\|}{\gap\cdot \Gap} . 
\end{equation}
Therefore, we conclude that 
\[
\uniinv{\sin\angle(X,\Xh)}= 
\uniinv{
\begin{bmatrix}
Y\\Z
\end{bmatrix}}
 \leq 
\frac{\uniinv{R}}{\Gap}(1+\frac{\|R_2\|}{\gap}), 
\]
the first inequality in~\eqref{eq:boundvecsubspace}. 
For the spectral and Frobenius norms, using the 
 stronger inequality 
 \begin{equation}   \label{eq:2Fget}
\left\|
  \begin{bmatrix}
    A\\B
  \end{bmatrix}\right\|_{2,F}\leq 
\sqrt{\left\|A \right\|_{2,F}^2
+\left\|B\right\|_{2,F}^2
}   ,
 \end{equation}
we obtain the second result in~\eqref{eq:boundvecsubspace}. 

We next prove~\eqref{eq:sin22subF}. 
\ignore{
We first improve the bound~\eqref{eq:dkspecialsub}. 
As before we have 
\begin{equation}  \label{eq:Ylam}
Y\Lambda_1 -\Lh_{2}Y = R_2^*Z,  
\end{equation}
so $\|Y\|\leq \frac{\|R_2\|\|Z\|}{\gap}$ by the bound for Sylvester's equation. Now 
\[
Z\Lambda_1 -A_3 Z =[R_1\ R_2]
\begin{bmatrix}
  W\\Y
\end{bmatrix}, 
\]
} 
From~\eqref{eq:Zeqn} we obtain $\uniinv{Z\Lambda_1 -A_3 Z } -\uniinv{R_2Y}\leq \uniinv{R_1W}$, hence using~\eqref{eq:Ynrmuniinv} we have 
\[
\uniinv{Z\Lambda_1 -A_3 Z }-
\frac{\|R_2\|^2\uniinv{Z}}{\gap}
\leq \uniinv{R_1W}. 
\]
Again using the Sylvester equation bound $\uniinv{Z\Lambda_1 -A_3 Z}\geq \Gap \uniinv{Z}$, 
we therefore obtain 
\[
(\Gap-\frac{\|R_2\|^2}{\gap})\uniinv{Z} \leq \uniinv{R_1W}. 
\]
Hence using the assumption $\Gap>\frac{\|R_2\|^2}{\gap}$ and the trivial bound $\|W\|\leq 1$  along with $\uniinv{R_1W}\leq \uniinv{R_1}\|W\|$, we obtain
\[
\uniinv{Z} \leq \frac{\uniinv{R_1}}{\Gap-\frac{\|R_2\|^2}{\gap}}. 
\]
Finally, using~\eqref{eq:2Fget} again we obtain
\[
\left\|\sin\angle(X,\Xh )\right\|_{2,F} = 
\left\|
\begin{bmatrix}
Y\\Z
\end{bmatrix}
\right\|_{2,F}
\leq \frac{\|R_1\|_{2,F}}{\Gap-\frac{\|R_2\|^2}{\gap}}\sqrt{1+\frac{\|R_2\|^2}{\gap^2}}, 
\]
giving~\eqref{eq:sin22subF}. 

It remains to establish 
\eqref{eq:sin22subindF}. 
Taking the $i$th row of~\eqref{eq:Ylam} gives 
$y_{i}\Lambda_1  -\lh_{k_1+i}y_{i} = r_{k_1+i}^*Z$ for $i=1,\ldots,k-k_1$, where $y_{i}$ is the $i$th row of $Y$. 
Hence 
\begin{equation}  \label{eq:ithysub}
\uniinv{y_{i}}\leq 
 \frac{\|r_{k_1+i}\|\uniinv{Z}}{\min|\lambda(\Lambda_1)  -\lh_{k_1+i}|}, \quad i=1,\ldots,k-k_1. 
\end{equation}
We also have 
$
Z\Lambda_1 -A_3 Z =[R_1,r_{k_1+1},\ldots,r_k]
\begin{bmatrix}
  W\\y_1\\\vdots\\y_{k-k_1}
\end{bmatrix}. 
$
This gives $(Z\Lambda_1 -A_3 Z)-[r_{k_1+1},\ldots,r_k]\begin{bmatrix}
  y_1\\\vdots\\y_{k-k_1}
\end{bmatrix}=R_1W$, 
so 
using~\eqref{eq:ithysub}
we obtain 
\begin{equation}
  \label{eq:rindivsubspace}
\uniinv{Z\Lambda_1 -A_3 Z} -
\sum_{i=k_1+1}^{k}\frac{\|r_{i}\|^2\uniinv{Z}}{
\min|\lambda(\Lambda_1)  -\lh_{i}|}
\leq \uniinv{R_1W}  . 
\end{equation}
Since $\uniinv{Z\Lambda_1 -A_3 Z }\geq \uniinv{Z}/\Gap$ as before, this gives
\[
(\Gap-
\sum_{i=k_1+1}^{k}\frac{\|r_{i}\|^2}{\min|\lambda(\Lambda_1)  -\lh_{i}|}
)\uniinv{Z} \leq \uniinv{R_1W}\leq \uniinv{R_1}\|W\|. 
\]
Hence using the assumption $\Gap>
\sum_{i=k_1+1}^{k}\frac{\|r_{i}\|^2}{\min|\lambda(\Lambda_1)  -\lh_{i}|}$ and the trivial bound $\|W\|\leq 1$ we obtain
\[
\uniinv{Z} \leq \frac{\uniinv{R_1}}{
\Gap-\sum_{i=k_1+1}^{k}\frac{\|r_{i}\|^2}{\min|\lambda(\Lambda_1)  -\lh_{i}|}
}.
\]
We use the fact $\uniinv{\sin\angle(X,\Xh)} = 
\uniinv{
\begin{bmatrix}
Y\\Z
\end{bmatrix}}
$ together with~\eqref{eq:ithysub} to complete the proof of 
\eqref{eq:sin22subindF}, again using~\eqref{eq:2Fget}. 
\end{proof}

\ignore{
Then the bottom part of the equation gives 
\[
 Z\Lambda - FZ =RY. 
\]
Using 
a well-known bound for Sylvester's equation  we obtain 
\begin{equation}  \label{eq:dkspecial}
\|Z\|\leq  \|R\|/\Gap  . 
\end{equation}
Note that the denominator is $\Gap$, not $\gap$. Note that the last bound is Davis-Kahan when the perturbation is off-diagonal. 

Now, write $Y=\begin{bmatrix}  Y_1\\Y_2\end{bmatrix}$ where $Y_1$ contains the desired part. Then from the second part of the first block of 
$A
\begin{bmatrix}
  Y\\Z
\end{bmatrix}=\lambda 
\begin{bmatrix}
  Y\\Z
\end{bmatrix}$ we obtain (writing $A_1 =
\begin{bmatrix}
  A_{11}&   0\\
  0&   A_{22}\end{bmatrix}$; crucially, the off-diagonal blocks are zero due to Rayleigh-Ritz)
\[
Y_2\Lambda - A_{22}Y_2 = R^*Z,
\]
from which we obtain the important bound 
\[
\|Y_2\| \leq \frac{\|R\|}{\gap}\|Z\|.
\]
Combining with~\eqref{eq:dkspecial} we obtain 
\[
\|Y_2\| =\|R\|^2/(\gap\cdot \Gap). 
\]
Therefore, we conclude that 
\[
\sin\angle(X,\xh )^2 = 
\left\|
\begin{bmatrix}
Y_2\\Z
\end{bmatrix}
\right\|^2 \leq  \frac{\|R\|^4}{\gap^2\cdot \Gap^2}+\frac{\|R\|^2}{\Gap^2}, 
\]
}


Four remarks are in order. 
\begin{remark}[Vector vs. subspace bounds]
  The $\|\cdot\|_{2,F}$ bounds in Theorem~\ref{thm:mainsubspace} reduce to the vector bounds in the previous sections by taking $k_1=1$. Thus they can be regarded as proper generalizations. 
\end{remark}
\begin{remark}[Bounds for unitarily invariant norms]
Just like the two bounds in~\eqref{eq:boundvecsubspace}, 
\eqref{eq:sin22subF} and~\eqref{eq:sin22subindF} have their counterparts applicable to any unitarily invariant norm, obtained by replacing the final factors of the form $\sqrt{1+b^2}$ by $1+b$. 
The proof is identical up to the very end, where we use the bound 
$\uniinv{\begin{bmatrix}A\\B\end{bmatrix}}\leq \uniinv{A}+\uniinv{B}$ instead of~\eqref{eq:2Fget}. 
\end{remark}
\begin{remark}[Question 10.3 by Davis-Kahan]
At the end of their landmark paper, Davis and Kahan~\cite{daviskahan} suggest four open problems. Among them, Question 10.2 asks for an extension of their theorems to the case where $\mathbb{C}^n$ is split into a pair of three (instead of two) subspaces  in two ways, $X_1,X_2,X_3$ (exact eigenspaces) and $\Xh_1,\Xh_2,\Xh_3$ (approximate ones). 
Namely, using information such as the Ritz values and residuals, can one bound the subspace angles? 
We argue that the above results give an answer---the setting in~\eqref{reftantheta} is precisely in this form, and Theorem~\ref{eq:boundvecsubspace} gives sharp bounds for $\unii{\sin\angle(X_1,\Xh_1)}$. 
\end{remark}
\begin{remark}[On the definition of \gap]\label{rem:gapdef}
As mentioned in footnote~\ref{foot1}, there is a difference in what is measured between our $\gap,\Gap$ in~\eqref{eq:Gapgapdef} and $\ggap$ in the classical treatments~\cite{daviskahan},\cite[Ch.~11]{parlettsym}. 
For example,~\eqref{eq:boundvecsubspace} reduces to $\frac{\|r_1\|}{\Gap}$ when $k_1=k=1$ (hence $R_2$ is empty), but $\Gap$ here 
 is the difference between $\lambda$ (an exact desired eigenvalue) and $\lambda(A_3)$ (approximations to undesired eigenvalues),
and the same can be said of $\gap$ in~\eqref{eq:Gapgapdef}. 
By contrast, $\ggap$\ in \eqref{eq:classical} is the distance between the approximate desired eigenvalue $\lh$ and exact undesired eigenvalues. 
It turns out that for~\eqref{eq:classical}, both gaps are applicable---indeed, we can obtain~\eqref{eq:classical} from Theorem~\ref{thm:mainsubspace}, takinig $k_1=k=n-1$: note that 
$\|\sin\angle(x,\xh)\|=
\|\xh^TX_\perp\|= \|\sin\min\angle(X_\perp,\Xh_\perp)\|$
(this can be verified e.g. via the CS decomposition~\cite[Thm.~2.5.2]{golubbook4th}; here $[x,X_\perp]$ and $[\xh,\Xh_\perp]$ are orthogonal), and 
invoke~\eqref{eq:boundvecsubspace} taking  
$X\leftarrow X_\perp$, $\Xh\leftarrow \Xh_\perp$, and $\Lambda_2$ empty. Then $\Gap$ in \eqref{eq:boundvecsubspace} becomes $\gap$ in \eqref{eq:classical}, and since the residuals are related by $R=r_1^T$ their norms are the same $\|R\|=\|r_1\|$, so
\eqref{eq:boundvecsubspace} reduces precisely to~\eqref{eq:classical}. In other words, the bound~\eqref{eq:classical} holds regardless of which definition of \gap\ is used. However, the analysis in this paper uses $\gap,\Gap$ in \eqref{eq:Gapgapdef}, and we have not proved that our theorems hold with $\ggap$.
\end{remark}
\begin{remark}[Proof techniques]
The reader might have also noticed that the proofs above are basically repeated applications of well-known norm inequalities in matrix analysis. One might then wonder, why do they appear to give stronger results than previous ones? The answer appears to lie in~\eqref{eq:At}---the simple but crucial unitary transformation from $A$ to $\At$ that simplifies the task to bounding $\|Y\|,\|Z\|$, as in \eqref{eq:sinx1xh1}. By contrast, most classical results work with $A$ and start from the residual equation $A\Xh_1-\Xh_1\Lh_1=R$ and 
derive bounds on $\angle(X_1,\Xh_1)$: for example the Davis-Kahan $\sin\theta$ theorem can be obtained essentially by left-multiplying $X_3^T$, taking $X_2$ empty. Knyazev~\cite[Sec.~4]{knyazevmc97} employed ingenious techniques to obtain (among others) essentially~\eqref{eq:y2knyasub}. Obtaining ``sharper'' bounds like~\eqref{eq:rindivsubspace} in a similar manner appears to be highly challenging. Once we reformulate the problem as in~\eqref{eq:At}--\eqref{eq:sinx1xh1}, the derivation becomes significantly simpler (in the author's opinion). 
\end{remark}


\section{SVD}
We now present an SVD analogue of Theorem~\ref{thm:mainsubspace}, 
 deriving bounds for the accuracy of singular vectors and singular subspaces obtained by a Petrov-Galerkin projection method. 
Such  methods proceed as follows: 
project $A$ onto lower-dimensional trial subspaces spanned by $\widehat U\in\mathbb{C}^{m\times k_m}, \widehat V\in\mathbb{C}^{n\times k_n}$ having orthonormal columns (for how to choose $\Uh,\Vh$ see e.g.~\cite{baitemplates,halko2011finding,wu2017primme_svds}), 
compute the SVD of the small $k_m\times k_n$ matrix 
$\Uh^*A\Vh = \Ut\Sh \Vt^*$
 and obtain an
approximate economical SVD as 
$A\approx (\Uh \Ut)\Sh (\Vh\Vt)^*$, which is 
of rank $\leq \min(k_m,k_n)$. 
 Some of the columns of $\Uh \Ut$ and $\Vh \Vt$ 
then approximate the exact left and right singular vectors of $A$. Our goal is to quantify their accuracy. 
We focus on the most frequently encountered case where an approximate SVD is sought, that is, the leading singular vectors are being approximated.




\begin{theorem}\label{thm:svdmain}
Let $A\in\mathbb{C}^{m\times n}$ with $m\geq n$, of the form
\begin{equation}  \label{eq:svdbasis}
[\widehat U_1\ \widehat U_2\ \widehat U_3]^{\ast}A[\widehat V_1\ \widehat V_2\ \widehat V_3]=
\begin{bmatrix}
\Sh_1&0& R_1\\
0&\Sh_2& R_2\\
 S_1& S_2& A_3
\end{bmatrix}=:\At, 
\end{equation}
where 
$[\widehat U_1\ \widehat U_2\ \widehat U_3]$ and 
$[\widehat V_1\ \widehat V_2\ \widehat V_3]$ are square unitary, and 
$\Sh_1\in\mathbb{R}^{k_1\times k_1}$, $\Sh_2\in\mathbb{R}^{(k_m-k_1)\times (k_n-k_1)}$ with 
$\big[\begin{smallmatrix}\Sh_1 & \\& \Sh_2  \end{smallmatrix} 
\big]$ equal to $\big[\begin{smallmatrix}{\rm diag}(\sh_1,\sh_2,\ldots,\sh_k)  \\ 0\end{smallmatrix} \big]$
if $k_m\geq k_n=k$, and 
$\big[\begin{smallmatrix}{\rm diag}(\sh_1,\sh_2,\ldots,\sh_k)\ 0_{k\times (k_n-k)}\end{smallmatrix} \big]$ 
if $k=k_m< k_n$. 
Let $(\Sigma_1,U_1,V_1)$ be the set of $k_1$ leading singular triplets of $A$. 
Define $\Gap=\min(\sigma(\Sh_1)-\sigma(A_3))$ and 
$\gap = \sigma_{\min}(\Sigma_1)-\|\Sh_2\|$, and suppose that $\Gap,\gap>0$. Write $[S_1\ S_2]=S$, $\big[
\begin{smallmatrix}
  R_1\\R_2
\end{smallmatrix}
\big]=R$ and for brevity define
$\uni{\Theta}:=\max(\unii{\sin\angle(U_1,\widehat  U_1)},\unii{\sin\angle(V_1,\widehat  V_1)})$. Then we have 
\begin{equation}  \label{eq:boundvecsvd}
\uni{\Theta}\leq 
\frac{\max(\uni{R},\uni{S})}{\Gap}\left(1+\frac{\max(\|R_2\|,\|S_2\|)}{\gap}\right).
\end{equation}
Moreover, provided that 
$\Gap>\frac{\max(\|S_2\|,\|R_2\|)^2}{\gap}$, we have 
\begin{align}
\uni{\Theta}\leq 
\label{eq:sin22subSVD}
\frac{\max(\uni{S_1},\uni{R_1})}{\Gap-\frac{\max(\|S_2\|,\|R_2\|)^2}{\gap}}
 \left(1+\frac{\max(\|R_2\|,\| S_2\|) }{\gap}\right). 
\end{align}
Finally, define $k':=\max(k_m-k_1,k_n-k_1)$, 
and 
denote by $r_{2i}^T$ the $i$th row of $R_{21}$ and 
by  $s_{2i}$ the $i$th column of $S_{21}$ (setting $r_{2i}=0$ for $i>k_m-k_1$ and $s_{2i}=0$ for $i>k_n-k_1$, and $\sh_{k_1+i}=0$ for $i>\min(k_m,k_n)-k_1$). 
If $\Gap>\sum_{i=1}^{k'}
\frac{ \max(\|r_{2i}\|,\|s_{2i}\|)^2}{\sigma_{\min}(\Sigma_1)-\sh_{k_1+i}}$, then 
\begin{equation}
  \label{eq:sin22subindSVD}
\uni{\Theta}\leq 
\frac{
\max(\uni{S_1},\uni{R_1})}{\Gap-\sum_{i=1}^{k'}
\frac{ \max(\|r_{2i}\|,\|s_{2i}\|)^2}{\sigma_{\min}(\Sigma_1)-\sh_{k_1+i}}}
\left(1+\sum_{i=1}^{k'}
\frac{ \max(\|r_{2i}\|,\|s_{2i}\|)}{\sigma_{\min}(\Sigma_1)-\sh_{k_1+i}}
\right)
.
%
\end{equation}
\end{theorem}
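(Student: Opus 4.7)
The plan is to follow the structure of Theorems~\ref{thm:mainsubspace} and~\ref{thm:eigrefine} with two adaptations dictated by the SVD setting: two subspaces must be bounded simultaneously (one for $U$, one for $V$), and the off-diagonal blocks of $\At$ split into $R$ (coupling to $V$) and $S$ (coupling to $U$). This is why the combined bound $\uni{\Theta}$ carries a maximum at every step. Introducing the auxiliary blocks
\[
\Ut_1 := [\Uh_1\ \Uh_2\ \Uh_3]^*\, U_1 = \begin{bmatrix} W_U \\ Y_U \\ Z_U \end{bmatrix}, \qquad
\Vt_1 := [\Vh_1\ \Vh_2\ \Vh_3]^*\, V_1 = \begin{bmatrix} W_V \\ Y_V \\ Z_V \end{bmatrix},
\]
the identity preceding Theorem~\ref{thm:mainsubspace} gives $\unii{\sin\angle(U_1,\Uh_1)} = \unii{\begin{bmatrix}Y_U\\ Z_U\end{bmatrix}}$ and analogously for $V$, so the task reduces to bounding these off-diagonal blocks. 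The SVD relations $AV_1=U_1\Sigma_1$ and $A^*U_1=V_1\Sigma_1$ translate into $\At\Vt_1=\Ut_1\Sigma_1$ and $\At^*\Ut_1=\Vt_1\Sigma_1$, producing six coupled block equations.

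The main trick is to pair the third-block-row equations into a single dilated Sylvester system,
\[
\begin{bmatrix}Z_U\\ Z_V\end{bmatrix}\Sigma_1 -
\begin{bmatrix}0 & A_3\\ A_3^* & 0\end{bmatrix}
\begin{bmatrix}Z_V\\ Z_U\end{bmatrix}
=
\begin{bmatrix}S_1 W_V + S_2 Y_V\\ R_1^* W_U + R_2^* Y_U\end{bmatrix}.
\]
The block antidiagonal operator on the left has spectrum $\pm\sigma(A_3)$, so its separation from $\sigma(\Sigma_1)\subset(0,\infty)$ is exactly $\Gap$, and the Sylvester bound used at~\eqref{eq:dkspecialsub} controls the left-hand side. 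Pairing the second-block-row equations likewise produces a system with block operator $\bigl[\begin{smallmatrix}0 & \Sh_2\\ \Sh_2^* & 0\end{smallmatrix}\bigr]$, whose spectrum is $\pm\sigma(\Sh_2)$ and whose separation from $\sigma(\Sigma_1)$ is $\gap=\sigma_{\min}(\Sigma_1)-\|\Sh_2\|$; this yields $\uniinv{\begin{bmatrix}Y_U\\ Y_V\end{bmatrix}}\le \max(\|R_2\|,\|S_2\|)\,\uniinv{\begin{bmatrix}Z_V\\ Z_U\end{bmatrix}}/\gap$. Combining the two bounds with $\|W_U\|,\|W_V\|\le 1$, taking maxima over the $U$- and $V$-sides, and applying~\eqref{eq:2Fget} where the $\|\cdot\|_{2,F}$ strengthening is needed, yields~\eqref{eq:boundvecsvd}.

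For~\eqref{eq:sin22subSVD}, I substitute the $Y$-bound back into the $Z$-equation exactly as in the proof of Theorem~\ref{thm:eigrefine}: moving the $Y$-contribution to the left gives $\bigl(\Gap - \tfrac{\max(\|S_2\|,\|R_2\|)^2}{\gap}\bigr)\uniinv{\begin{bmatrix}Z_U\\ Z_V\end{bmatrix}}\le \max(\uniinv{S_1},\uniinv{R_1})$ under the positivity hypothesis, and one further application of the $Y$-bound produces the stated inequality. For~\eqref{eq:sin22subindSVD} I refine further by taking individual rows of the $Y_U$-equation and individual columns of the $Y_V$-equation, so that the $i$th piece is controlled by $\|r_{2i}\|,\|s_{2i}\|$ and the shift $\sigma_{\min}(\Sigma_1)-\sh_{k_1+i}$, mirroring~\eqref{eq:ithy} and~\eqref{eq:ithysub}. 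Substituting these row/column-wise bounds into the $Z$-equation as in~\eqref{eq:rindiv}--\eqref{eq:rindivsubspace} produces~\eqref{eq:sin22subindSVD}.

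The main obstacle is bookkeeping rather than new technique. Because $\Sh_2$ can be rectangular, the row indices of the $Y_U$-equation and the column indices of the $Y_V$-equation run over different ranges; the parameter $k'=\max(k_m-k_1,k_n-k_1)$ together with the conventions $r_{2i}=0$, $s_{2i}=0$, $\sh_{k_1+i}=0$ for out-of-range $i$ merges both sides into the single sum appearing in~\eqref{eq:sin22subindSVD}. One must also verify that the dilation trick correctly transfers $\Gap$ and $\gap$ as spectral separations on both sides simultaneously (using $\sigma_{\min}(\Sigma_1)>0$ and the antidiagonal structure), and that the \emph{max} is threaded consistently through each inequality. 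Once the dilation is in place, each step becomes a near-verbatim translation of the Hermitian-case arguments in Theorems~\ref{thm:mainsubspace} and~\ref{thm:eigrefine}.
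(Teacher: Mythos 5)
Your overall strategy---transform by the unitary factors, reduce $\uni{\Theta}$ to bounding the $(2,\cdot)$ and $(3,\cdot)$ blocks of $\Ut_1$ and $\Vt_1$, bound the $Y$-blocks in terms of the $Z$-blocks via $\gap$, bound the $Z$-blocks via $\Gap$, then substitute back for the refined bounds---is exactly the paper's, and the bookkeeping for the rectangular case matches. The one genuinely different ingredient is the Jordan--Wielandt-type dilation, and that is precisely where your argument falls short of the stated constants. In the dilated Sylvester system the right-hand side is $D=\mathrm{diag}(S,R^*)\cdot\mathrm{diag}\bigl(\bigl[\begin{smallmatrix}W_V\\Y_V\end{smallmatrix}\bigr],\bigl[\begin{smallmatrix}W_U\\Y_U\end{smallmatrix}\bigr]\bigr)$, so the Sylvester bound yields $\unii{[Z_U;Z_V]}\le \unii{\mathrm{diag}(S,R^*)}/\Gap$. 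For the spectral norm $\unii{\mathrm{diag}(S,R^*)}=\max(\|S\|,\|R\|)$ and you recover \eqref{eq:boundvecsvd} exactly; but for the Frobenius norm it is $\sqrt{\|S\|_F^2+\|R\|_F^2}$ and for a general unitarily invariant norm it can be as large as $\unii{S}+\unii{R}$, so you only get the theorem with $\max(\uni{R},\uni{S})$ replaced by something up to twice as large (and likewise $\max(\uni{S_1},\uni{R_1})$ in \eqref{eq:sin22subSVD}--\eqref{eq:sin22subindSVD}). The paper's remark following the theorem notes exactly this: the Jordan--Wielandt route was tried and found ``less clean and looser.''

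The paper avoids the loss by never stacking the two third-block equations. It keeps them separate, takes norms to obtain the coupled pair $\sigma_{\min}(\Sigma_1)\unii{\Ut_{31}}\le\unii{S}+\|A_3\|\unii{\Vt_{31}}$ and $\sigma_{\min}(\Sigma_1)\unii{\Vt_{31}}\le\unii{R}+\|A_3\|\unii{\Ut_{31}}$, and then eliminates $\unii{\Vt_{31}}$ by adding $\sigma_{\min}(\Sigma_1)$ times the first inequality to $\|A_3\|$ times the second; the factor $\sigma_{\min}(\Sigma_1)+\|A_3\|$ cancels against $\sigma_{\min}(\Sigma_1)^2-\|A_3\|^2$ and leaves $\unii{\Ut_{31}}\le\max(\uni{R},\uni{S})/\Gap$, with the genuine $\max$ and no dimension- or norm-dependent constant. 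The same elimination applied to the second-block pair gives $\max(\unii{\Ut_{21}},\unii{\Vt_{21}})\le\max(\|R_2\|,\|S_2\|)\max(\unii{\Ut_{31}},\unii{\Vt_{31}})/\gap$ (your $\gap$-step happens to be lossless because there the coefficient enters only through its spectral norm). If you replace the dilation by this elimination, the rest of your outline---including the row/column-wise refinement with the conventions for out-of-range indices---goes through and reproduces the paper's proof.
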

Though not displayed for brevity, slightly improved bounds for $\|\cdot\|_{2,F}$ analogous to those in Theorem~\ref{thm:mainsubspace} are available for each bound above. 
The derivation is again the same, using the inequality $\left\|\big[\begin{smallmatrix}A\\B  \end{smallmatrix}\big]\right\|_{2,F}\leq \sqrt{\left\|A \right\|_{2,F}^2+\left\|B\right\|_{2,F}^2}$.  

\ignore{
We follow a standard approach for extending results in symmetric eigenvalue problems, the use of the Jordan-Wielandt matrix 
$
\big[
\begin{smallmatrix}
  0& \At\\
   \At^*&0
\end{smallmatrix}
\big]$, whose eigenvalues are $\pm \sigma_i(\At)$ for $i=1,\ldots,n$ and $m-n$ copies of zero. The eigenvector matrix is 
$
\big[
\begin{smallmatrix}
\Ut& \Ut & \Ut_0\\
\Vt& -\Vt& 0
\end{smallmatrix}
\big]$~\cite[Thm.~I.4.2]{stewart-sun:1990}, where $\At=\Ut\Sigma \Vt^*$ is the SVD and $\Ut_0$ is the null space of $\At^*$. The Jordan-Wielandt matrix $J$ for $\At$, and its permuted version $P^TJP$ (taking the row/column blocks in the order $1,2,4,5,3,6$) is 
\[
J =   \begin{bmatrix}
\Scale[2]{0}  & 
\begin{matrix}
\Sh_1&0& R_1\\
0&\Sh_2& R_2\\
 S_1& S_2& A_3
\end{matrix}   \\ 
\begin{matrix}
\Sh_1&0& R_1\\
0&\Sh_2& R_2\\
 S_1& S_2& A_3
\end{matrix} & \Scale[2]{0} 
  \end{bmatrix},\qquad 
P^TJP = 
  \begin{bmatrix}
\Scale[2]{0} & 
\begin{matrix}
\Sh_1&0\\
0 & \Sh_2
\end{matrix}& 
\begin{matrix}
0& S_1^*\\
0 & S_2^*
\end{matrix}\\
\begin{matrix}
\Sh_1&0\\
0 & \Sh_2
\end{matrix} & \Scale[2]{0} 
& 
\begin{matrix}
R_1&0\\
R_2 & 0
\end{matrix} \\
\begin{matrix}
0&0\\
S_1 & S_2
\end{matrix} &
\begin{matrix}
R_1^* & R_2^*\\
0&0
\end{matrix} 
& 
\begin{matrix}
0 & A_3^*\\
A_3& 0
\end{matrix} 
  \end{bmatrix}. 
\]
Let 
the top-left $2k\times 2k$ part of $P^TJP$ have eigenvalue decomposition 
$W\mbox{diag}(\Sh_1,\Sh_2,-\Sh_1,-\Sh_2)W^T$; we can take 
$W =\frac{1}{\sqrt{2}}\big[
\begin{smallmatrix}
I& I \\  
I& -I 
\end{smallmatrix}
\big] $. 
 Then 
\[
\begin{bmatrix}
W& \\ & I_{m+n-2k}  
\end{bmatrix}
P^TJP\mbox{diag}
\begin{bmatrix}
W^T& \\ & I_{m+n-2k}  
\end{bmatrix}
=
  \begin{bmatrix} 
\begin{matrix}
\Sh_1&0\\
0 & \Sh_2
\end{matrix}
&\Scale[2]{0} &
\begin{matrix}
\frac{1}{\sqrt{2}}R_1&\frac{1}{\sqrt{2}}S_1^*\\
\frac{1}{\sqrt{2}}R_2 & \frac{1}{\sqrt{2}}S_2^*
\end{matrix}\\
 \Scale[2]{0} &
\begin{matrix}
-\Sh_1&0\\
0 & -\Sh_2
\end{matrix} 
& 
\begin{matrix}
-\frac{1}{\sqrt{2}}R_1&\frac{1}{\sqrt{2}}S_1^*\\
-\frac{1}{\sqrt{2}}R_2 & \frac{1}{\sqrt{2}}S_2^*
\end{matrix} \\
\begin{matrix}
\frac{1}{\sqrt{2}}R_1^*&\frac{1}{\sqrt{2}}R_2^*\\
\frac{1}{\sqrt{2}}S_1 & \frac{1}{\sqrt{2}}S_2
\end{matrix} &
\begin{matrix}
-\frac{1}{\sqrt{2}}R_1^*&-\frac{1}{\sqrt{2}}R_2^*\\
\frac{1}{\sqrt{2}}S_1 & \frac{1}{\sqrt{2}}S_2
\end{matrix} 
& 
\begin{matrix}
0 & A_3^*\\
A_3& 0
\end{matrix} 
  \end{bmatrix}. 
\]
This matrix is now in the form~\eqref{eq:At} (taking the top-left $k\times k$ part as the matrix of Ritz values). 
Hence we can invoke the results in the previous sections, to give 
}
\begin{proof}

Let $\left(\Sigma_1,\Ut_1,\Vt_1\right)$ be a set of exact singular triplets 
of $\tilde A$, 
i.e., $\tilde A\Vt_1=\Ut_1\Sigma_1$ and 
$\Ut_1^*\tilde A=\Sigma_1\Vt_1^*$. 
Write $\Vt_1=\begin{bmatrix}\Vt_{11}\\\Vt_{21}\\\Vt_{31}  \end{bmatrix}, \Ut_1=\begin{bmatrix}\Ut_{11}  \\\Ut_{21}\\\Ut_{31}  \end{bmatrix}$, so that 
\begin{equation}
  \label{eq:Av1}
 \begin{bmatrix}
\Sh_1&0& R_1\\
0&\Sh_2& R_2\\
 S_1& S_2& A_3
\end{bmatrix}
\begin{bmatrix}\Vt_{11}\\\Vt_{21}\\\Vt_{31}  \end{bmatrix}=
\begin{bmatrix}\Ut_{11}  \\\Ut_{21}\\\Ut_{31}  \end{bmatrix}\Sigma_1,
\end{equation}
and
\begin{equation}
  \label{eq:Au}
\begin{bmatrix}\Ut_{11}^{\ast} \ \Ut_{21}^{\ast}\ \Ut_{31}^{\ast}  \end{bmatrix}
\begin{bmatrix}
\Sh_1&0& R_1\\
0&\Sh_2& R_2\\
 S_1& S_2& A_3
\end{bmatrix}=
\Sigma_1\begin{bmatrix}\Vt_{11}^{\ast}\ \Vt_{21}^{\ast}\ \Vt_{31}^{\ast}  \end{bmatrix}  .
\end{equation}

As in the previous sections, we have the crucial identities 
\begin{equation}  \label{eq:u1andv1}
\uni{\sin\angle(U_1,\widehat U_1)}=\uni{  \begin{bmatrix}\Ut_{21}    \\\Ut_{31}    \end{bmatrix}}, \quad
\uni{\sin\angle(V_1,\widehat V_1)}=\uni{  \begin{bmatrix}\Vt_{21}    \\\Vt_{31}    \end{bmatrix}}.
\end{equation} 
To prove the theorem we first  bound
$\unii{\Ut_{21}}$  with respect to $\unii{\Ut_{31}}$, and similarly 
bound $\unii{\Vt_{21}}$  with respect to $\unii{\Vt_{31}}$. 

From the second block of~\eqref{eq:Av1} 
 we obtain 
\begin{equation}  \label{eq:forsvd1}
\Sh_2\Vt_{21}+ R_2\Vt_{31}=\Ut_{21}\Sigma_1,  
\end{equation}
and the second block of \eqref{eq:Au} gives 
\begin{equation}  \label{eq:forsvd2}
\Ut_{21}^{\ast}\Sh_2+\Ut_{31}^{\ast} S_2=\Sigma_1\Vt_{21}^{\ast}.  
\end{equation}
Taking norms and using the triangular inequality 
and the fact $\sigma_{\min}(X)\uniinv{Y}\leq \uniinv{XY}\leq \|X\|\uniinv{Y}$ (the lower bound holds if $X\in\mathbb{C}^{m\times n}, m\geq n$) in \eqref{eq:forsvd1} and \eqref{eq:forsvd2}, we obtain
 \begin{equation}
   \label{eq:del1}
   \begin{split}     
\uniinv{\Ut_{21}}\sigma_{\min}(\Sigma_1)-\uniinv{\Vt_{21}}\|\Sh_2\|&\leq \uniinv{ R_2\Vt_{31}},   \\
\uniinv{\Vt_{21}}\sigma_{\min}(\Sigma_1)-\uniinv{\Ut_{21}}\|\Sh_2\|&\leq \uniinv{\Ut_{31}^{\ast} S_2}. 
   \end{split}
 \end{equation}
By adding the first inequality times $\sigma_{\min}(\Sigma_1)$ and the second inequality times $\|\Sh_2\|$, we eliminate the $\unii{\Vt_{21}}$ term, 
and recalling the assumption 
$\sigma_{\min}(\Sigma_1)>\|\Sh_2\|$ we obtain 
\begin{equation}  \nonumber 
\uni{\Ut_{21}}\leq \frac{ \sigma_{\min}(\Sigma_1)\unii{R_2\Vt_{31}}+ \|\Sh_2\|\unii{\Ut_{31}^{\ast}S_2}}{(\sigma_{\min}(\Sigma_1))^2-\|\Sh_2\|^2}.
\end{equation}
Eliminating $\unii{\Ut_{21}}$ from~\eqref{eq:del1} 
similarly yields 
\begin{equation} \nonumber 
\uni{\Vt_{21}}\leq \frac{ \sigma_{\min}(\Sigma_1)\unii{\Ut_{31}^{\ast}S_2}+ \|\Sh_2\|\unii{R_2\Vt_{31}}}{(\sigma_{\min}(\Sigma_1))^2-\|\Sh_2\|^2}.
\end{equation}
Combining these two inequalities we obtain 
\begin{align}
\max(\uni{\Ut_{21}},\uni{\Vt_{21}})&\leq \frac{\max(\unii{\Ut_{31}^{\ast}S_2},\unii{R_2\Vt_{31}}) }{\sigma_{\min}(\Sigma_1)-\|\Sh_2\|}\nonumber\\
&\leq \frac{\max(\unii{\Ut_{31}},\unii{\Vt_{31}}) \max(\|R_2\|,\|S_2\|) }{\sigma_{\min}(\Sigma_1)-\|\Sh_2\|}\nonumber\\
&=  \frac{\max(\|R_2\|,\| S_2\|) }{\gap}
\max(\uni{\Ut_{31}},\uni{\Vt_{31}}).    \label{eq:21to31sig}
\end{align}
Together with \eqref{eq:u1andv1} it follows that
\begin{align}
\max&(\uni{\sin\angle(U_1,\widehat  U_1)},\uni{\sin\angle(V_1,\widehat  V_1)})\nonumber=\max\left(
\uni{  \begin{bmatrix}\Ut_{21}    \\\Ut_{31}    \end{bmatrix}},
\uni{  \begin{bmatrix}\Vt_{21}    \\\Vt_{31}    \end{bmatrix}}\right)\nonumber\\
&\leq\max(
\unii{  \Ut_{21}}  +\unii{  \Ut_{31}},
\unii{  \Vt_{21}}  +\unii{  \Vt_{31}})
\nonumber\\
&\leq (1+\frac{\max(\|R_2\|,\| S_2\|) }{\gap})
\max(\uni{  \Ut_{31}},\uni{  \Vt_{31}}).\label{eq:sinuuvv}
\end{align}
The remaining task is to bound $\max(\unii{  \Ut_{31}},\unii{  \Vt_{31}})$. 
The bottom block of~\eqref{eq:Av1} gives 
\begin{equation}  \label{eq:Ut31S1S2etc}
 S_1\Vt_{11}+ S_2\Vt_{21}+A_3\Vt_{31}=\Ut_{31}  \Sigma_1.   
\end{equation}
Hence recalling that $[S_1\ S_2]=S$ we have 
\begin{equation}
  \label{eq:simple1}
\sigma_{\min}(\Sigma_1)\Uniinv{\Ut_{31}  }\leq  \Uniinv{S}+\|A_3\|\Uniinv{\Vt_{31}}.   
\end{equation}

Similarly, from the last block of~\eqref{eq:Au} 
\begin{equation}
  \label{eq:eq:Ut31S1S2etc2}
\Ut_{11}^{\ast}R_1+ \Ut_{21}^{\ast}R_2+ \Ut_{31}^{\ast} A_3=\Sigma_1\Vt_{31}^{\ast},    
\end{equation}
we obtain 
\begin{equation}  \label{eq:simple2}
\sigma_{\min}(\Sigma_1)\Uniinv{\Vt_{31}  }\leq  
\Uniinv{ R}
+\|A_3\|\Uniinv{\Ut_{31}}
\end{equation}
We multiply~\eqref{eq:simple1} by $\sigma_{\min}(\Sigma_1)$ and~\eqref{eq:simple2} by $\|A_3\|$, and add them to eliminate the $\unii{\Vt_{31}}$ terms, to obtain 
\begin{align*}
(\sigma_{\min}(\Sigma_1)^2-\|A_3\|^2)\Uniinv{\Ut_{31}}
&\leq 
\|A_3\|\Uniinv{R}
+\sigma_{\min}(\Sigma_1)\Uniinv{S}
\\
&\leq 
(\sigma_{\min}(\Sigma_1)+\|A_3\|)\max(\Uniinv{R},\Uniinv{S}).
\end{align*}
Hence by the assumption $\Gap=\sigma_{\min}(\Sigma_1)-\|A_3\|>0$, we have 
\[
\Uniinv{\Ut_{31}}\leq \frac{\max(\Uniinv{R},\Uniinv{S})}{\Gap}. 
\]
Eliminating the $\unii{\Ut_{31}}$ terms from \eqref{eq:simple1} and \eqref{eq:simple2} yields the same bound for $\unii{\Vt_{31}}$, hence 
\[
\max(\Uniinv{\Ut_{31}},\Uniinv{\Vt_{31}})\leq 
\frac{\max(\Uniinv{R},\Uniinv{S})}{\Gap}. 
\]
Combine this with~\eqref{eq:21to31sig} and~\eqref{eq:u1andv1} to obtain~\eqref{eq:boundvecsvd}. 

We next prove~\eqref{eq:sin22subSVD}. 
From~\eqref{eq:Ut31S1S2etc} we also obtain
\begin{equation}
  \label{eq:Ut311}
\sigma_{\min}(\Sigma_1)\Uniinv{\Ut_{31}}\leq \Uniinv{S_1}+\|S_2\|\Uniinv{\Vt_{21}}+\|A_3\|\Uniinv{\Vt_{31}} ,
\end{equation}
and from~\eqref{eq:eq:Ut31S1S2etc2}, 
\begin{equation}
  \label{eq:Vt311}
\sigma_{\min}(\Sigma_1)\Uniinv{\Vt_{31}}
\leq \Uniinv{R_1}+\Uniinv{\Ut_{21}}\|R_2\|+\Uniinv{\Ut_{31}}\|A_3\|.   
\end{equation}
Again eliminate the $\|\Vt_{31}\|$ terms
by multiplying~\eqref{eq:Ut311} by $\sigma_{\min}(\Sigma_1)$ and~\eqref{eq:Vt311} by $\|A_3\|$, and adding them:
\begin{align*}
(\sigma_{\min}&(\Sigma_1)^2-\|A_3\|^2)\Uniinv{\Ut_{31}}\\
&\leq \sigma_{\min}(\Sigma_1)(\Uniinv{S_1}+\|S_2\|\Uniinv{\Vt_{21}} )
+\|A_3\|(\Uniinv{R_1}+\|R_2\|\Uniinv{\Ut_{21}})  \\
&\leq 
(\sigma_{\min}(\Sigma_1)+\|A_3\|)(\max(\Uniinv{S_1},\Uniinv{R_1})+\max(\|S_2\|,\|R_2\|)
\max(\Uniinv{\Ut_{21}},\Uniinv{\Vt_{21}})). 
\end{align*}
Therefore, using~\eqref{eq:21to31sig} we obtain 
\begin{align}
\uni{\Ut_{31}}
&\leq \frac{ 
\max(\uni{S_1},\uni{R_1})+\max(\|S_2\|,\|R_2\|)
\max(\uni{\Ut_{21}},\uni{\Vt_{21}})}{\sigma_{\min}(\Sigma_1)-\|A_3\|}\nonumber\\
&\leq \frac{1}{\Gap}\nonumber
\left(\max(\uni{S_1},\uni{R_1})+\frac{\max(\|S_2\|,\|R_2\|)^2}{\gap}\max(\uni{\Ut_{31}},\uni{\Vt_{31}})\right).
\end{align}
As before, eliminating $\unii{\Ut_{31}}$ from~\eqref{eq:Ut311} and~\eqref{eq:Vt311} yields the same bound for $\unii{\Vt_{31}}$, hence 
\begin{align*}
\max(\Uniinv{\Ut_{31}},\Uniinv{\Vt_{31}})
&\leq \frac{1}{\Gap}
\left(\max(\Uniinv{S_1},\Uniinv{R_1})+\frac{\max(\|S_2\|,\|R_2\|)^2}{\gap}\max(\Uniinv{\Ut_{31}},\Uniinv{\Vt_{31}})\right). 
\end{align*}
Therefore using the assumption $\Gap>\frac{\max(\|S_2\|,\|R_2\|)^2}{\gap}$ we obtain
\begin{align*}
\max(\Uniinv{\Ut_{31}},\Uniinv{\Vt_{31}})
&\leq \frac{\max(\Uniinv{S_1},\Uniinv{R_1})}{\Gap-\frac{\max(\|S_2\|,\|R_2\|)^2}{\gap}}. 
\end{align*}
Together with~\eqref{eq:sinuuvv} 
we obtain~\eqref{eq:sin22subSVD}.   

The remaining task is to establish~\eqref{eq:sin22subindSVD}. For this, we 
revisit \eqref{eq:forsvd1}, \eqref{eq:forsvd2}, and now bound the individual $\Ut_{21i}$, the $i$th row of $\Ut_{21}$. 
We obtain 
\[
\Uniinv{\Ut_{21i}}\sigma_{\min}(\Sigma_1)-\Uniinv{\Vt_{21i}}\sh_{k_1+i}\leq \Uniinv{ r_{2i}^T\Vt_{31}}, \quad 
\Uniinv{\Vt_{21i}}\sigma_{\min}(\Sigma_1)-\Uniinv{\Ut_{21i}}\sh_{k_1+i}\leq \Uniinv{ \Ut_{31}^*s_{2i}}, 
\]
Eliminating $\Vt_{21i}$ and $\Ut_{21i}$ as before gives 
\begin{equation}  \nonumber 
\Uniinv{\Ut_{21i}}\leq \frac{ \sigma_{\min}(\Sigma_1)\Uniinv{r_{2i}^T\Vt_{31}}+ \sh_{k_1+i}\Uniinv{\Ut_{31}^{\ast}s_{2i}}}{(\sigma_{\min}(\Sigma_1))^2-\sh_{k_1+i}^2},
\end{equation}
\begin{equation}  \nonumber 
\Uniinv{\Vt_{21i}}\leq \frac{ \sigma_{\min}(\Sigma_1)\Uniinv{\Ut_{31}^{\ast}s_{2i}}+ \sh_{k_1+i}\Uniinv{r_{2i}^T\Vt_{31}}}{(\sigma_{\min}(\Sigma_1))^2-\sh_{k_1+i}^2}. 
\end{equation}
Note that when $k_m\neq k_n$, $(\Ut_{21i},r_{2i})$ or $(\Vt_{21i},s_{2i})$ is empty for large $i$; by taking $\sh_{k_1+i}=0$ for such $i$ the argument carries over. We therefore have for every $i$
\begin{equation}
  \label{eq:ut21i}
\max(\Uniinv{\Ut_{21i}},\Uniinv{\Vt_{21i}})\leq 
\frac{ \max(\|s_{2i}\|,\|r_{2i}\|)
\max(\Uniinv{\Ut_{31}},\Uniinv{\Vt_{31}})
}{\sigma_{\min}(\Sigma_1)-\sh_{k_1+i}}.   
\end{equation}

From~\eqref{eq:Ut31S1S2etc} we also obtain
\begin{equation}
  \label{eq:Ut311final}
\sigma_{\min}(\Sigma_1)\Uniinv{\Ut_{31}}\leq \|A_3\|\Uniinv{\Vt_{31}}+\Uniinv{S_1}+\sum_{i=1}^{k_n-k_1}\|s_{2i}\|\Uniinv{\Vt_{21i}} ,
\end{equation}
and from~\eqref{eq:eq:Ut31S1S2etc2}, 
\begin{equation}
  \label{eq:Vt311final}
\sigma_{\min}(\Sigma_1)\Uniinv{\Vt_{31}}
\leq \Uniinv{\Ut_{31}}\|A_3\|+\Uniinv{R_1}+
\sum_{i=1}^{k_m-k_1}\Uniinv{\Ut_{21i}}\|r_{2i}\|.   
\end{equation}
Hence eliminating $\unii{\Ut_{31}}$ and then $\unii{\Vt_{31}}$, and using~\eqref{eq:ut21i} gives 
\begin{align*}
\max(&\unii{\Ut_{31}},\unii{\Vt_{31}})\\
\leq &
\frac{1}{\Gap}
\left(\max(\uni{S_1},\uni{R_1})+
\sum_{i=1}^{k'}\frac{\max(\|r_{2i}\|,\|s_{2i}\|)^2}{\sigma_{\min}(\Sigma_1)-\sh_{k_1+i}}\max(\Uniinv{\Ut_{31}},\Uniinv{\Vt_{31}})\right).
\end{align*}
Thus by the assumption
$\Gap>\frac{\sum_{i=1}^{k'}\max(\|r_{2i}\|,\|s_{2i}\|)^2}{\sigma_{\min}(\Sigma_1)-\sh_{k_1+i}}$ we have 
\[
\max(\Uniinv{\Ut_{31}},\Uniinv{\Vt_{31}})\leq 
\frac{
\max(\uni{S_1},\uni{R_1})}{\Gap-\frac{\sum_{i=1}^{k'}\max(\|r_{2i}\|,\|s_{2i}\|)^2}{\sigma_{\min}(\Sigma_1)-\sh_{k_1+i}}}.
\]
Finally, the bound~\eqref{eq:sin22subindSVD} follows from combining this with~\eqref{eq:ut21i} and~\eqref{eq:u1andv1}. 
\end{proof}

We note that $R$ or $S$ in the above theorem is allowed to be empty, as in the case where a one-sided projection is employed. This includes the popular randomized SVD algorithm~\cite{halko2011finding}. 
We make two more remarks. 
\begin{remark}[Other approaches for the SVD]
A standard approach to extending results in symmetric eigenvalue problems to the SVD 
is to use the Jordan-Wielandt matrix, for example as in~\cite[Sec.~3]{rcli05}. 
As pointed out in~\cite{nakatsukasa2017accuracy}, 
this has the slight downside of introducing spurious eigenvalues at 0. 
Moreover, the results via Jordan-Wielandt we obtained were less clean and looser than Theorem~\ref{thm:svdmain}. 
Another approach is to work with the Gram matrix $A^*A$, but this unnecessarily squares the singular values and modifies $\gap$ and $\Gap$. 
For these reasons, we have chosen to work directly with the SVD equations.   
\end{remark}

\begin{remark}[Proof of \eqref{eq:boundvecsvd} via Wedin and \cite{nakatsukasa2017accuracy}]
As in Remark~\ref{rem:dksaad}, a proof for~\eqref{eq:boundvecsvd} can be given by combining Wedin's result (the SVD analogue of Davis-Kahan) and~\cite{nakatsukasa2017accuracy} (SVD analogue of Saad's result). 
The sharper bounds~\eqref{eq:boundvecsvd} and~\eqref{eq:sin22subSVD} cannot be obtained this way. 
\end{remark}

\ignore{
\subsubsection{Refined bounds for singular subspaces}As in Theorem~\ref{thm:eigrefine}, we can refine the bounds by using individual information about $\|R_i\|$. 
\begin{theorem}\label{thm:svdrefined}
In the setting of Theorem~\ref{thm:svdmain}, 
$\max(\|\sin\angle(U_1,\widehat  U_1)\|,\|\sin\angle(V_1,\widehat  V_1)\|)$ is bounded from above by 
\begin{equation}  \label{eq:sin22subSVD}
 \frac{\max(\|R_1\|,\|S_1\|)}{\Gap-
\frac{\max(\|R_2\|,\|\St_2\|)^2}{\gap}}\sqrt{1+\frac{\max(\|R_2\|,\|\St_2\|)^2}{\gap^2}}, 
\end{equation}
and 
\begin{equation}
  \label{eq:sin22subindSVD}
 \frac{\max(\|R_1\|,\|S_1\|)}{
\Gap-\max\left(\sum_{i=2}^{k}\frac{\|R_{i}\|^2}{|\sigma  -\sh_{i}|}, 
\sum_{i=2}^{k}\frac{\|S_{i}\|^2}{|\sigma  -\sh_{i}|}
\right)
}\sqrt{1+
\max\left(\sum_{i=2}^{k}\frac{\|R_{i}\|}{|\sigma  -\sh_{i}|},
\sum_{i=2}^{k}\frac{\|S_{i}\|}{|\sigma  -\sh_{i}|}
\right)^2}.
\end{equation}
\end{theorem}

We omit the proof as it is long but not enlightening, which combines techniques in the proofs of Theorems~\ref{thm:svdmain} and \ref{thm:eigrefine}. 
}

\section{Eigenvectors of a self-adjoint operator}\label{sec:hilbert}
So far we have specialized to finite-dimensional matrices as the analysis is 
elementary and the situation is more transparent. In this final section, as in~\cite{knyazevmc97,ovtchinnikov2006cluster}, we extend the discussion to the infinite-dimensional case, where 
the matrix is generalized to a 
self-adjoint operator $\A:\mathcal{H}\rightarrow \mathcal{H}$ 
on a Hilbert space $\mathcal{H}$ with inner product $\left<\cdot,\cdot\right>$. 
Unlike the previous studies, which assumed the operators are bounded, our discussion allows $\A$ to be unbounded, thus is applicable for example to differential operators $\mathcal{A}u=u''$; 
in this case, we assume that $\A$ is densely defined, as is customary. 

Let $Q$ be a subspace of $\mathcal{H}$, which 
is of finite dimension $k$ with orthonormal basis $q_1,\ldots,q_k$. 
In the Rayleigh-Ritz process for $\A$, we compute the $k\times k$ matrix $A_1$ with $(i,j)$ element $\left<q_i,\A q_j\right>$ and its eigenvalue decomposition 
$A_1 = \Omega\mbox{diag}(\lh_1,\ldots,\lh_k) \Omega^\ast$
to obtain the Ritz values $\lh_1,\ldots,\lh_k$ and Ritz vectors $[q_1,\ldots,q_k]\Omega$. 
Denote by $Q_1,Q_2\in\mathcal{H}$ the resulting Ritz subspaces corresponding to disjoint sets of eigenvalues of $A_1$ (we have $Q=Q_1\oplus Q_2$), and let $Q_3$ be the (infinite-dimensional) orthogonal complement of $Q$ such that $\mathcal{H}=Q_1\oplus Q_2\oplus Q_3$ is an orthogonal direct sum. 

For simplicity, we treat the case where $Q_1$ is one-dimensional
(subspace versions can be obtained, generalizing Section~\ref{sec:RRangles}). 
That is, let $(\lh,\uh)$ be a Ritz pair with $\uh=q_1$, and 
suppose that $\mathcal{A}u = \lambda u$; note that this is an assumption, as a self-adjoint operator may not have any eigenvalue (e.g.~\cite[Ch.~9]{hunter}), although the spectrum is always nonempty. 
 The goal is to 
bound $\sin\angle(\uh,u)$. 

Denote by $\P_i$ be the orthogonal projectors onto each subspace $Q_i$. 
We define $\mathcal{A}_{ij}:=\P_i\A \P_j$. 
Then the R-R process forces $\mathcal{A}_{12}=0, \mathcal{A}_{21}=0$. 
Note that $\mathcal{A}_{13}^*=\mathcal{A}_{31}, \mathcal{A}_{23}^*=\mathcal{A}_{32}$ (where $*$ denotes the adjoint of the operators), and these terms represent the residuals, hence we write 
$\|R_1\|=\|R_{31}\|$ and $\|R\|=\|\mathcal{A}_{31}+\mathcal{A}_{32}\|$.
Also define 
$\|R_2\|=\|\mathcal{A}_{32}\| (=\|\mathcal{A}_{23}\|)$, and 
$\|r_i\|=\|\mathcal{A}_{32}\mathcal{P}_{2,i}\|(=\|\mathcal{P}_{2,i}\mathcal{A}_{23}\|)$ for $i=2,\ldots,k$, where 
$\mathcal{P}_{2,i}$ is the $1$-dimensional projection onto the $i$th Ritz vector. 
The quantities $\gap$ and $\Gap$ are defined by 
$\gap=\min|\lh-\lambda(\mathcal{A}_{22})|$, 
$\Gap=\min|\lh-\lambda(\mathcal{A}_{33})|$, in which  $\lambda(\mathcal{A}_{ii})$ denotes the spectrum of the restriction of $\mathcal{A}_{ii}$ to $Q_i$.



\begin{theorem}\label{thm:hilbert}
Under the above assumptions and notation, 
\begin{equation}  \label{eq:boundvecselfadjoint}
\sin\angle(u,\uh)\leq 
\frac{\|R\|}{\Gap}\sqrt{1+\frac{\|R_2\|^2}{\gap^2}}
\quad
\left(
\leq  \frac{\|R\|}{\Gap}(1+\frac{\|R_2\|}{\gap})
\right).
\end{equation}
Moreover, 
if $\Gap>\frac{\|R_2\|^2}{\gap}$, then 
\begin{equation}
  \label{eq:sin22_inf}
\sin\angle(u,\uh) 
\leq \frac{\|R_1\|}{\Gap-\frac{\|R_2\|^2}{\gap}}\sqrt{1+\frac{\|R_2\|^2}{\gap^2}}  , 
\end{equation}
and if $\Gap>\sum_{i=2}^{k}\frac{\|r_{i}\|^2}{|\lambda  -\lh_{i}|}$, then 
\begin{equation}
  \label{eq:sin2indiv_inf}
\sin\angle(u,\uh) 
\leq 
\frac{\|R_1\|}{
\Gap-\sum_{i=2}^{k}\frac{\|r_{i}\|^2}{|\lambda  -\lh_{i}|}}
\sqrt{1+
\left(\sum_{i=2}^{k}\frac{\|r_{i}\|}{|\lambda  -\lh_{i}|}\right)^2}. 
\end{equation}
\end{theorem}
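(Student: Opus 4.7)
The plan is to mimic the matrix proofs of Theorems~\ref{thm:mainscalar} and~\ref{thm:eigrefine} in the operator setting, replacing the block decomposition of $\widetilde A$ by the orthogonal direct sum $\mathcal{H}=Q_1\oplus Q_2\oplus Q_3$ and matrix blocks by the operators $\mathcal{A}_{ij}=\mathcal{P}_i\mathcal{A}\mathcal{P}_j$. The one new ingredient is the spectral theorem for self-adjoint operators, which ensures that whenever $\mu\notin\lambda(\mathcal{A}_{ii})$ the resolvent $(\mu I-\mathcal{A}_{ii})^{-1}$ is bounded on $Q_i$ with norm $1/\mathrm{dist}(\mu,\lambda(\mathcal{A}_{ii}))$; this is applicable here because each $\mathcal{A}_{ii}$ is self-adjoint on its reducing subspace (possibly unbounded on $Q_3$ but densely defined), and the gaps $\gap,\Gap$ are assumed positive.

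First I would decompose the exact eigenvector as $u=w+y+z$ with $w=\mathcal{P}_1u\in Q_1$, $y=\mathcal{P}_2u\in Q_2$, $z=\mathcal{P}_3u\in Q_3$. Since $\widehat u$ spans $Q_1$, the Pythagorean identity yields the operator analogue of~\eqref{eq:sinx1xh1},
\[
\sin\angle(u,\widehat u)=\sqrt{\|y\|^2+\|z\|^2}.
\]
Applying each $\mathcal{P}_i$ to $\mathcal{A}u=\lambda u$ and using $\mathcal{A}_{12}=\mathcal{A}_{21}=0$ yields the three equations
\[
\mathcal{A}_{11}w+\mathcal{A}_{13}z=\lambda w,\qquad
\mathcal{A}_{22}y+\mathcal{A}_{23}z=\lambda y,\qquad
\mathcal{A}_{31}w+\mathcal{A}_{32}y+\mathcal{A}_{33}z=\lambda z,
\]
which are the precise analogues of the block relations driving Theorem~\ref{thm:mainscalar}. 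From the second equation the resolvent bound $\|(\lambda I-\mathcal{A}_{22})^{-1}\|\le 1/\gap$ gives $\|y\|\le\|R_2\|\|z\|/\gap$, and from the third, using $\|w\|,\|y\|\le 1$ and $\|(\lambda I-\mathcal{A}_{33})^{-1}\|\le 1/\Gap$, I obtain $\|z\|\le\|R\|/\Gap$. Combining the two reproduces~\eqref{eq:boundvecselfadjoint} exactly as in Theorem~\ref{thm:mainscalar}.

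For~\eqref{eq:sin22_inf}, I would rewrite the third equation as $(\lambda I-\mathcal{A}_{33})z=\mathcal{A}_{31}w+\mathcal{A}_{32}y$, take norms, and substitute $\|\mathcal{A}_{32}y\|\le\|R_2\|\|y\|\le\|R_2\|^2\|z\|/\gap$ to obtain $(\Gap-\|R_2\|^2/\gap)\|z\|\le\|R_1\|$, exactly as in Theorem~\ref{thm:eigrefine}. For~\eqref{eq:sin2indiv_inf}, decompose $y=\sum_{i=2}^{k}y_i$ with $y_i=\mathcal{P}_{2,i}u$ the component of $u$ along the $i$th Ritz vector in $Q_2$; projecting the second equation onto $\mathcal{P}_{2,i}$ gives $(\lambda-\widehat\lambda_i)y_i=\mathcal{P}_{2,i}\mathcal{A}_{23}z$, hence $\|y_i\|\le\|r_i\|\|z\|/|\lambda-\widehat\lambda_i|$. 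Substituting these finer bounds into $\|\mathcal{A}_{32}y\|\le\sum_{i=2}^{k}\|r_i\|\|y_i\|$ and repeating the elimination yields~\eqref{eq:sin2indiv_inf}.

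The main obstacle, if any, is the bookkeeping associated with the possibly unbounded $\mathcal{A}_{33}$. Self-adjointness of $\mathcal{A}$ together with the orthogonal decomposition ensures each $\mathcal{A}_{ii}$ is self-adjoint on its subspace, so the spectral theorem supplies the resolvent norm estimates used above. The off-diagonal pieces $\mathcal{A}_{31},\mathcal{A}_{32},\mathcal{P}_{2,i}\mathcal{A}_{23}$ represent residuals and are bounded (this is what $\|R\|,\|R_2\|,\|r_i\|<\infty$ encode), so the norm manipulations remain valid. Beyond this technical verification, the argument is a verbatim transcription of the matrix proofs.
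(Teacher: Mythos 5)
Your proposal is correct and follows essentially the same route as the paper's proof: the same orthogonal decomposition $u=u_1+u_2+u_3$, the same three projected equations with $\mathcal{A}_{12}=\mathcal{A}_{21}=0$, the same resolvent bounds $1/\gap$ and $1/\Gap$, and the same refinement via the rank-one projectors $\mathcal{P}_{2,i}$. The only step to tighten is the derivation of $\|z\|\leq \|R\|/\Gap$: bounding $\|\mathcal{A}_{31}w\|$ and $\|\mathcal{A}_{32}y\|$ separately using $\|w\|,\|y\|\leq 1$ would give $\|R_1\|+\|R_2\|$ rather than $\|R\|=\|\mathcal{A}_{31}+\mathcal{A}_{32}\|$; instead observe that $\mathcal{A}_{31}w+\mathcal{A}_{32}y=(\mathcal{A}_{31}+\mathcal{A}_{32})u$ and use $\|u\|=1$, as the paper does.
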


\begin{proof}  
Writing $u=\sum_{i=1}^3 u_i$ with $u_i\in Q_i$, 
the $Q_i$-component of $\mathcal{A}u=\lambda u$ each implies 
\begin{subequations}  
  \begin{align}
\lambda u_1&=\mathcal{A}_{11}u_1+ \mathcal{A}_{13}u_3 , \label{eq:Aihilba}\\
\lambda u_2&=\mathcal{A}_{22}u_2+ \mathcal{A}_{23}u_3 , \label{eq:Aihilbb}\\
\lambda u_3&=\mathcal{A}_{31}u_1+ \mathcal{A}_{32}u_2+\mathcal{A}_{33}u_3 .\label{eq:Aihilbc}
  \end{align}
\end{subequations}
Our goal is to bound $\sin\angle(u,\uh)=\sqrt{\|u_2\|^2+\|u_3\|^2}$. 

We first derive~\eqref{eq:boundvecselfadjoint}, an analogue of Theorem~\ref{thm:mainscalar}. 
By~\eqref{eq:Aihilbc}, we have 
\[\|(\mathcal{A}_{33}-\lambda)u_3\|= \|\mathcal{A}_{31}u_1+\mathcal{A}_{32}u_2\|=\|(\mathcal{A}_{31}+\mathcal{A}_{32})u\|\leq \|\mathcal{A}_{31}+\mathcal{A}_{32}\|=\|R\|.\] 
Together with the fact $\|(\mathcal{A}_{33}-\lambda)u_3\|\geq \Gap \|u_3\|$ 
(\cite[\S~V.3.5]{kato1995perturbation}; to see this, note that $v=(\mathcal{A}_{33}-\lambda)u_3$ implies $(\mathcal{A}_{33}-\lambda)^{-1}v=u_3$, hence $\|u_3\|\leq \|(\mathcal{A}_{33}-\lambda)^{-1}\|\|v\|\leq \Gap \|v\|$), 
we obtain 
\begin{equation}  \label{eq:u3}
\|u_3\|\leq \frac{\|R\|}{\Gap}  . 
\end{equation}

Now~\eqref{eq:Aihilbb} gives 
$(\mathcal{A}_{22}-\lambda)u_2=- \mathcal{A}_{23}u_3 $. Since $\|\mathcal{A}_{23}\|=\|\mathcal{A}_{23}^*\|=\|\mathcal{A}_{32}\|= \|R_2\|$, and  $\|(\mathcal{A}_{22}-\lambda)u_2\|\geq \gap \|u_2\|$, 
we thus have 
$\|u_2\|\leq \frac{\|R_2\|}{\gap} \|u_3\|$.
Using this and~\eqref{eq:u3}, 
we obtain~\eqref{eq:boundvecselfadjoint}. 

We now turn to~\eqref{eq:sin2indiv_inf}; the proof of~\eqref{eq:sin22_inf} is similar and omitted. 
As in Theorem~\ref{thm:mainsubspace}, the idea is to improve the estimate of $\|u_2\|$ using \eqref{eq:Aihilbb}. 
Projecting it onto $\mathcal{P}_{2,i}$ gives 
$\mathcal{P}_{2,i}(\mathcal{A}_{22}-\lambda)u_2+ \mathcal{P}_{2,i}\mathcal{A}_{23}u_3 =0$
for $i=2,\ldots,k$, and 
by assumption $\mathcal{P}_{2,i}\mathcal{A}_{22}=\lh_i\mathcal{P}_{2,i}$, so 
$(\lh_i-\lambda)\mathcal{P}_{2,i}u_2+ \mathcal{P}_{2,i}\mathcal{A}_{23}u_3 =0, $
hence
\begin{equation}
  \label{eq:Piu2}
\|\mathcal{P}_{2,i}u_2\|\leq \frac{\|\mathcal{P}_{2,i}\mathcal{A}_{23}u_3\|}{|\lh_i-\lambda|}
\leq \frac{\|\mathcal{P}_{2,i}\mathcal{A}_{23}\|\|u_3\|}{|\lh_i-\lambda|}
= \frac{\|r_i\|\|u_3\|}{|\lh_i-\lambda|}, 
\end{equation}
where we used  $\|r_i\|=\|\mathcal{P}_{2,i}\mathcal{A}_{23}\|$ for the final equality. 
The inequality~\eqref{eq:Piu2} holds for $i=2,\ldots,k$. 

Now 
since $\mathcal{A}_{32} = 
 \mathcal{A}_{32}\mathcal{P}_{2}=
\sum_{i=2}^k\mathcal{A}_{32}\mathcal{P}_{2,i} $, 
we can rewrite~\eqref{eq:Aihilbc} as 
$\mathcal{A}_{31}u_1+ \sum_{i=2}^k \mathcal{A}_{32}\mathcal{P}_{2,i}u_2=-(\mathcal{A}_{33}-\lambda)u_3$. Therefore 
\begin{align*}
\|(\mathcal{A}_{33}-\lambda)u_3\|&\leq 
\|\mathcal{A}_{31}u_1\|+ \sum_{i=2}^k\|\mathcal{A}_{32}\mathcal{P}_{2,i} u_2\|\leq \|R_1\|+ \sum_{i=2}^k\|\mathcal{A}_{32}\mathcal{P}_{2,i}\|\|\mathcal{P}_{2,i} u_2\|  \\
&\leq \|R_1\|+ \sum_{i=2}^k\frac{\|r_i\|^2\|u_3\|}{|\lh_i-\lambda|}  , 
\end{align*}
where we used~\eqref{eq:Piu2} and $\|r_i\|=\|\mathcal{A}_{32}\mathcal{P}_{2,i}\|$. 
Together with $\|(\mathcal{A}_{33}-\lambda)u_3\|\geq \Gap\|u_3\|$ we obtain 
\begin{equation}  \label{eq:u3bound}
\|u_3\|\leq \frac{\|R_1\|}{\Gap\left(1-\sum_{i=2}^k\frac{\|r_i\|^2}{|\lh_i-\lambda|} \right)}   . 
\end{equation}

Finally,~\eqref{eq:Piu2} together with 
$\|u_2\|^2=\sum_{i=2}^k\|\mathcal{P}_{2,i}u_2\|^2$ gives
$\|u_2\|^2\leq \left(\sum_{i=2}^k\frac{\|r_i\|}{|\lh_i-\lambda|}\right)^2 \|u_3\|^2$, 
so 
\[
\sin\angle(u,\ut) = \sqrt{\|u_2\|^2+\|u_3\|^2}
\leq \frac{\|R_1\|}{\Gap\left(1-\sum_{i=2}^k\frac{\|r_i\|^2}{|\lh_i-\lambda|} \right)}\sqrt{1+\left(\sum_{i=2}^k\frac{\|r_i\|}{|\lh_i-\lambda|}\right)^2 }, 
\]
completing the proof of~\eqref{eq:sin2indiv_inf}. 
\end{proof}

\subsection{Experiments: Sturm-Liouville eigenvalue problem}\label{sec:SL}
\ignore{
We illustrate Theorem~\ref{thm:hilbert} with a 
simple Sturm-Liouville eigenvalue problem (e.g.~\cite[\S~3.5]{folland1992fourier})
\begin{equation}  \label{eq:SL}
\mathcal{A}u = u''=\lambda u,\qquad u(0)  = u(\pi)=0,\qquad u\in\mathcal{H}=H^{2}(0,\pi).
\end{equation}
$\mathcal{A}$ is an unbounded self-adjoint operator, with a full set of (infinitely many) orthonormal eigenfunctions, with explicit solutions $(\lambda,u)=(n^2,\sin(nx))$ for $n\in\mathbb{N}$. 
We attempt to compute the 
smallest eigenpairs (which correspond to the smoothest eigenfunctions) 
by taking the trial subspace to be 
\begin{equation}
  \label{eq:trialQ}
Q(x) = \mbox{Orth}(x(\pi-x)[P_0(x),P_1(x),\ldots,P_{k-1}(x)])
\end{equation}
where $P_i(x)$ are the Legendre polynomials, orthogonal on $[0,\pi]$. Here Orth is an operator that outputs the orthonormal column space; here it simply applies a scalar scaling to each column. 
$Q$ is called a quasimatrix~\cite{trefethen2010householder} of size $\infty\times k$. 
Figure~\ref{fig:SR} (left) shows the basis functions~\eqref{eq:trialQ} before and after the orthonormalization for $k=7$. 

Having defined the subspace $Q$, we can perform R-R, namely 
compute the matrix $\mathcal{A}_1=Q^T\mathcal{L}Q=Q^TQ''$ and its eigenvalue decomposition 
$\mathcal{A}_1=\Omega\Lh \Omega^\ast$, and take the Ritz values $\lh=\mbox{diag}(\Lh)$ and 
the Ritz vectors $\uh$ to be the columns of $Q\Omega$, each of which is a function. 

Figure~\ref{fig:SR} shows the convergence to 
the smallest eigenfunction $\angle(u,\uh)$ and its bounds, as in Figure~\ref{fig:lob}. As before, our bound~\eqref{eq:sin22_inf} gives tighter estimates of the actual error, although here Davis-Kahan also gives reasonably good bounds since $\gap$ is not very small here. 
Note that the error $\angle(u,\uh)$ and bounds all have an odd-even pattern; roughly, they go down only when the degree becomes even; this reflects the fact that the eigenfunction $\sin(x)$ is an even function about $x=\pi/2$. 

\begin{figure}[htpb]
  \begin{minipage}[t]{0.5\hsize}
      \includegraphics[width=0.9\textwidth]{figs/basis_funcsRleg.pdf}
  \end{minipage}   
  \begin{minipage}[t]{0.5\hsize}
      \includegraphics[width=1.0\textwidth]{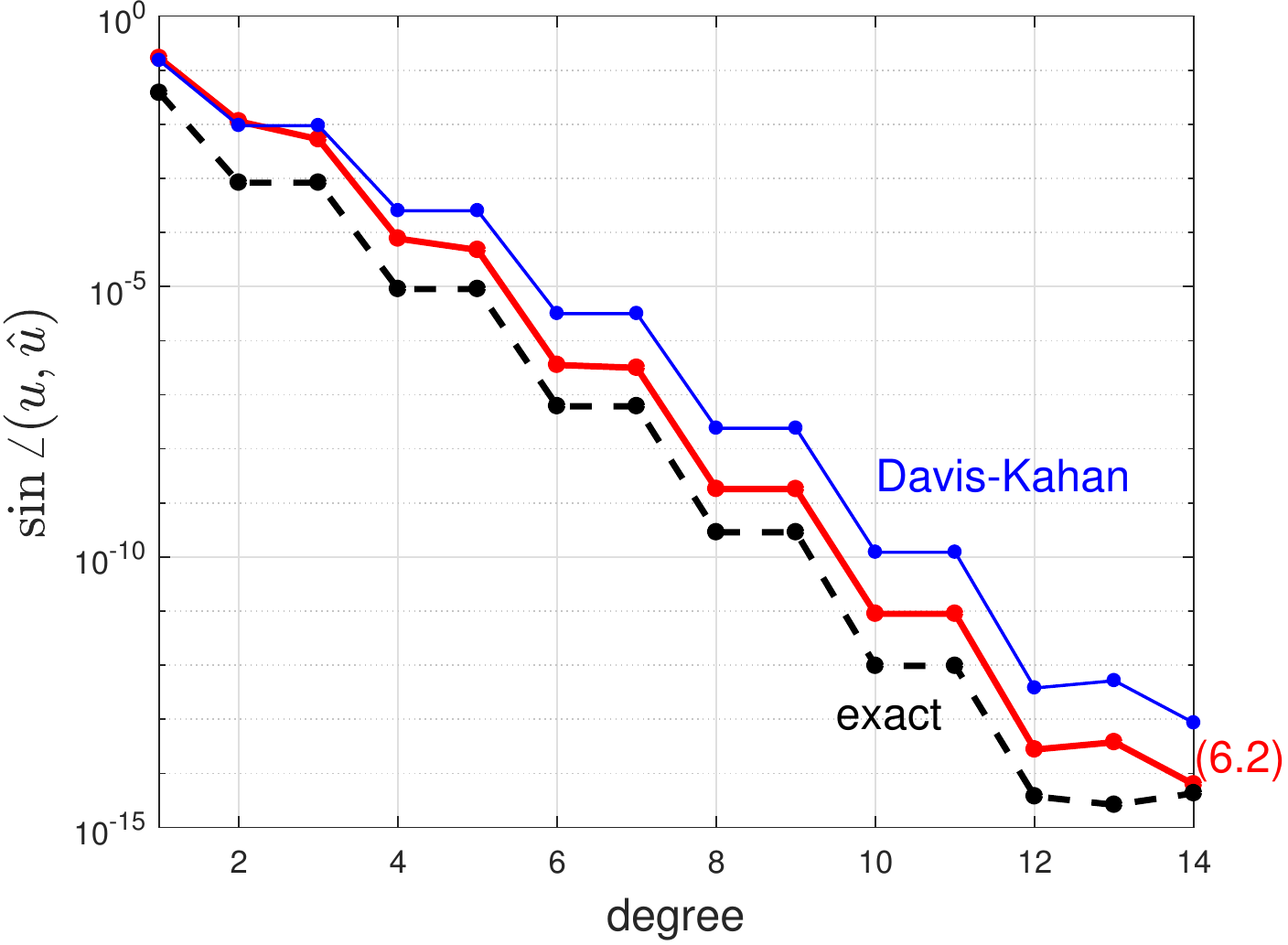}
  \end{minipage}
  \caption{
Left: Basis functions before (top) and after (bottom) orthonormalization. 
Right: Convergence of $\angle(u,\uh)$ and its bounds. 
}
  \label{fig:SR}
\end{figure}

Finally, in Figure~\ref{fig:SR_res} we illustrate the behavior of the residual 
function $\mathcal{A}\uh-\lh \uh$ as $k$ varies. 
We make two observations. First, clearly the norm 
$\|\mathcal{A}\uh-\lh \uh\|$ decays as $k$ increases, essentially like the right plot in Figure~\ref{fig:SR}. The second and more interesting observation is that the residuals appear to become more and more oscillatory (non-smooth) as $k$ grows. This is a general tendency, and can be explained as follows. As emphasized repeatedly in this paper, R-R forces the residual to be orthogonal to $Q$, which contains the ``smoothest'' functions. Consequently, in the Legendre expansion of $\mathcal{A}\uh-\lh \uh=\sum_{i=0}^\infty c_iP_i(x)$, $|c_i|$ are small for $i< k$; they are bounded roughly by $\|u_2\|$, which is $O(\|\mathcal{A}\uh-\lh\uh\|^2)$, by~\eqref{eq:Piu2}; this also reflects the main result in~\cite{knyazevmc97}. 
By growing $k$, the residual becomes orthogonal to more and more of these smoothest functions, and therefore its graph becomes more oscillatory. 

\begin{figure}[htpb]
  \begin{center}
      \includegraphics[width=0.99\textwidth]{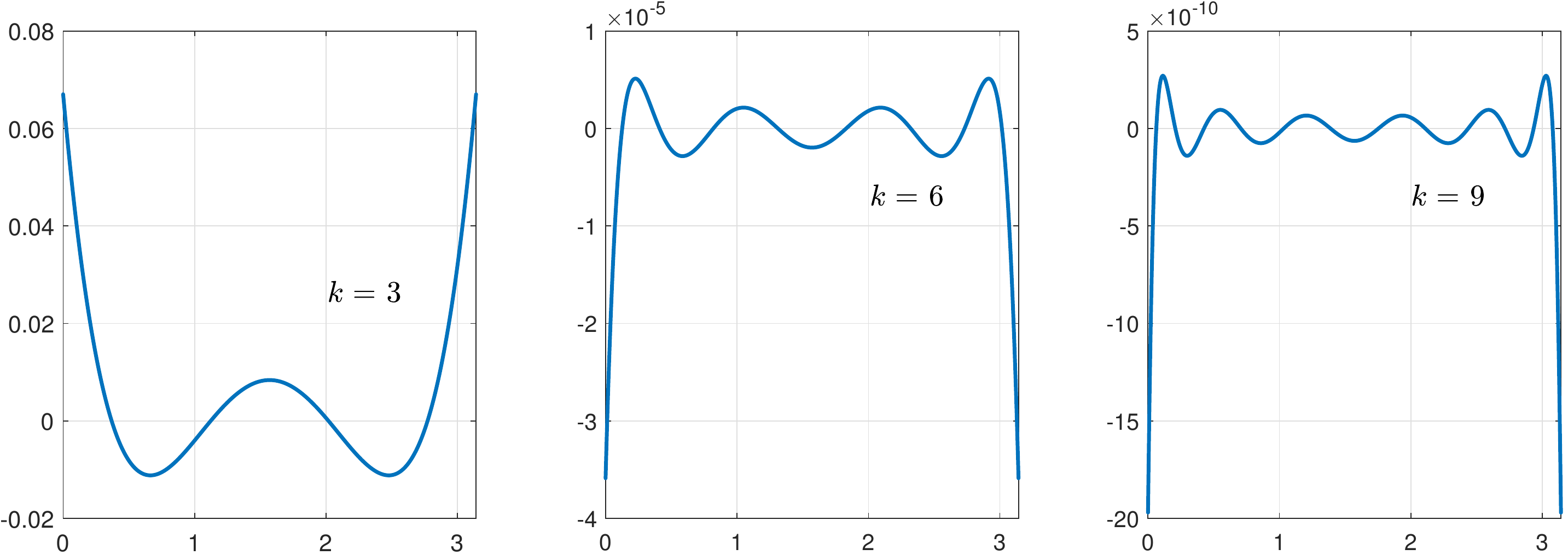}    
  \end{center}
  \caption{
Residual function $\mathcal{A}\uh-\lh \uh$ for $k=3, 6$ and $9$. 
}
  \label{fig:SR_res}
\end{figure}
}

We illustrate Theorem~\ref{thm:hilbert} with a 
simple Sturm-Liouville eigenvalue problem (e.g.~\cite[\S~3.5]{folland1992fourier})
\begin{equation}  \label{eq:SL}
\mathcal{A}u = u''=\lambda u,\qquad u'(0)  = \alpha u(0), 
\quad u'(\pi) = \beta u(\pi), 
\qquad u\in\mathcal{H}=H^{2}(0,\pi).
\end{equation}
$\mathcal{A}$ is an unbounded self-adjoint operator, with a full set of (infinitely many) orthonormal eigenfunctions. 
Here we take $\alpha = 1, \beta = -1$. 
The exact eigenvalues are $\lambda_i=-\nu_i^2$, where 
$\nu_i$ are the solutions for $\tan\pi\nu = 2\nu/(\nu^2-1)$, with corresponding eigenfunction $\nu_i\cos\nu_i x+\alpha\sin\nu_i x$~\cite[\S~3.5]{folland1992fourier}. 
We attempt to compute the eigenpairs
 with the smoothest eigenfunctions, i.e., eigenpairs closest to 0. 
To do this, a natural idea is to take low-degree polynomials. We take the trial subspace to be the $k$-dimensional subspace of polynomials $p$ of degree up to $k+1$ that satisfy the two boundary conditions 
$p'(0)  = \alpha p(0)$ and $p'(\pi) = \beta p(\pi)$. 
Figure~\ref{fig:SRFolland} (left) shows the basis functions obtained in this way, for $k=7$. Such computations can be done conveniently using Chebfun~\cite{chebfunguide}. 

Having defined the subspace $Q$, we can perform R-R to obtain the Ritz vectors (which are functions in $\mathcal{H}$ here), along with the Ritz values. 

Figure~\ref{fig:SRFolland} shows the convergence of 
$\angle(u,\uh)$  to the eigenfunction $u$ for the smallest eigenpair and its bounds, analogous to Figure~\ref{fig:lob}. As in that experiment, our bound~\eqref{eq:sin22_inf} gives tighter bounds for the actual error, although here Davis-Kahan also performs well, since $\gap$ is not very small. 


\begin{figure}[htpb]
  \begin{minipage}[t]{0.49\hsize}
      \includegraphics[width=0.9\textwidth]{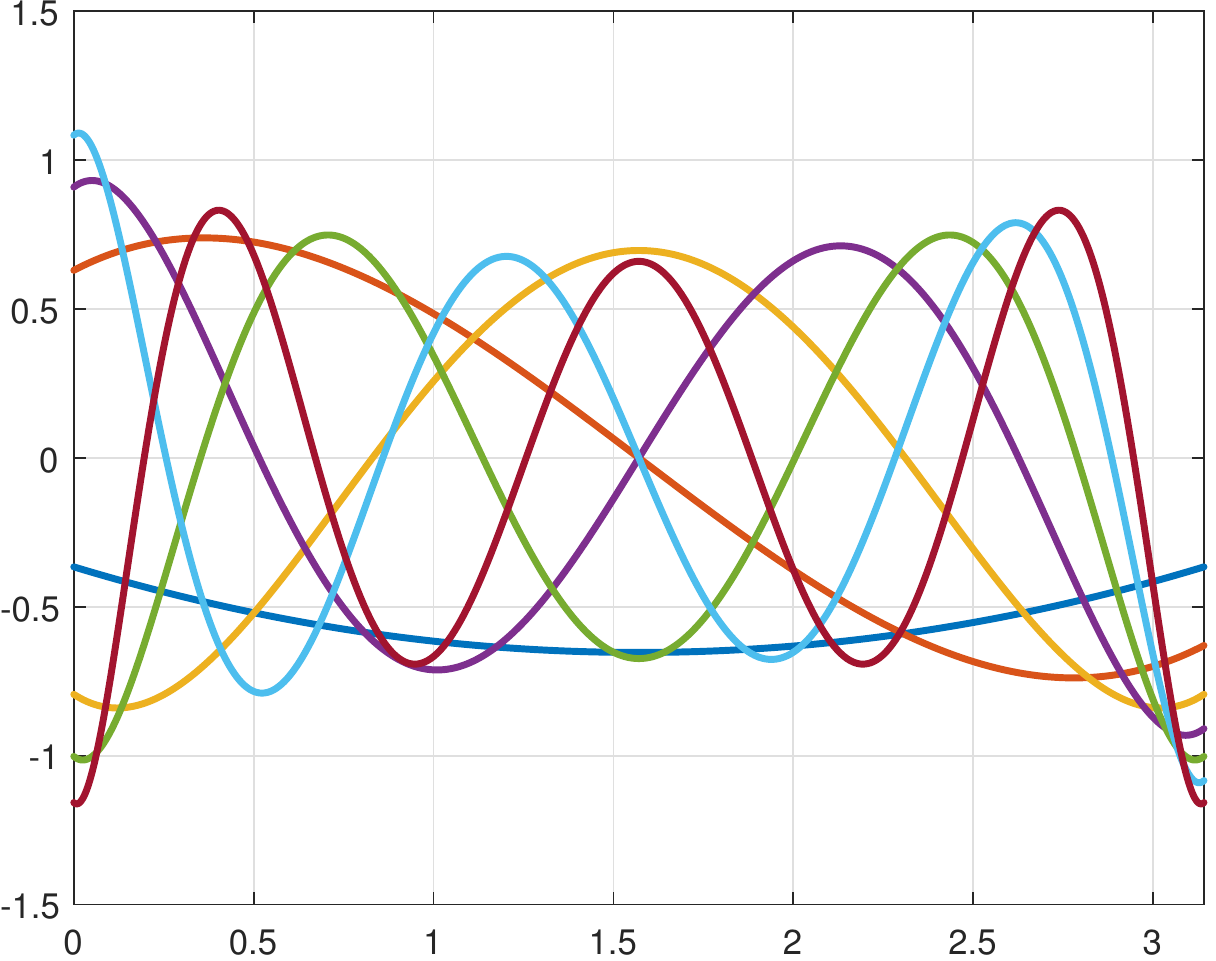}
  \end{minipage}   
  \begin{minipage}[t]{0.49\hsize}
      \includegraphics[width=1.0\textwidth]{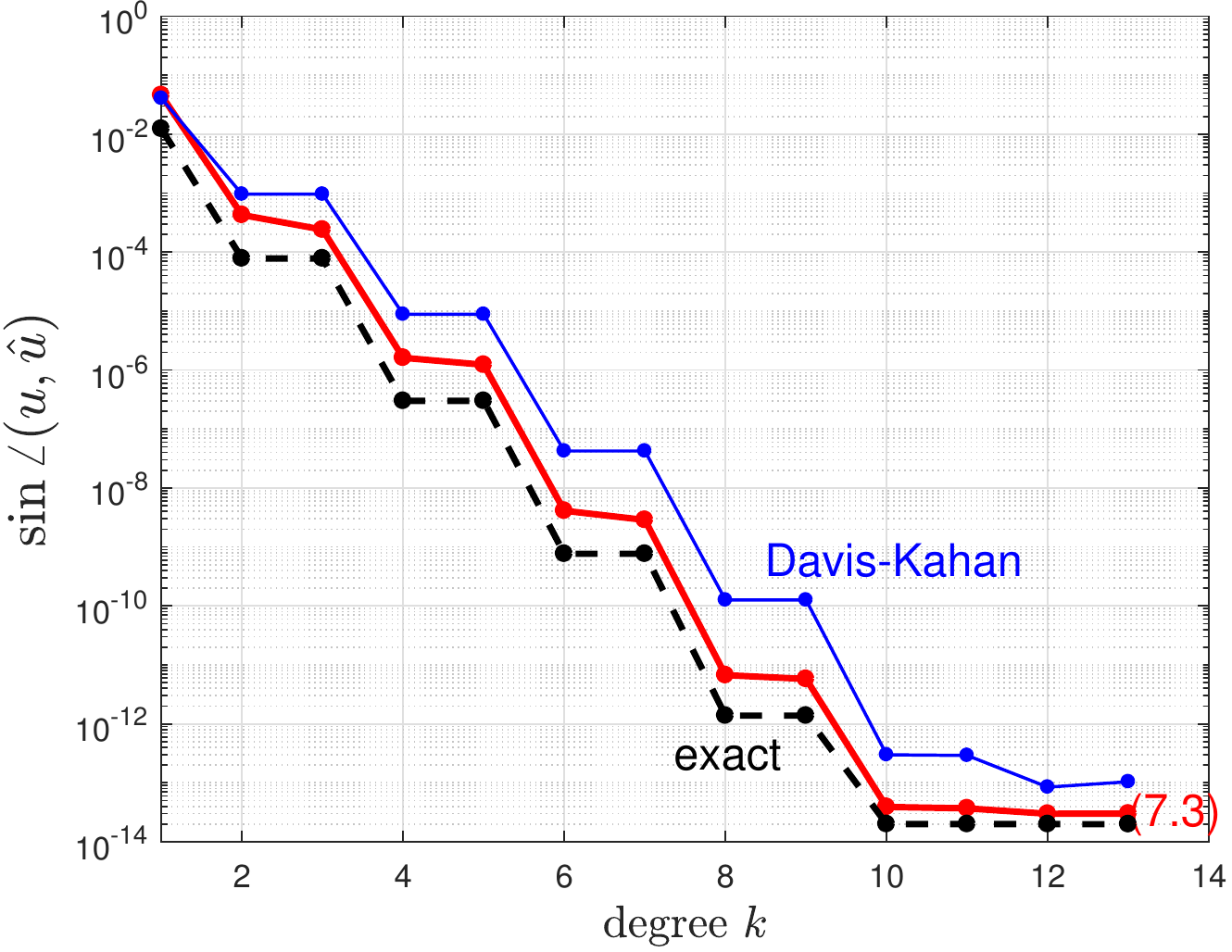}
  \end{minipage}
  \caption{
Left: Basis functions for projection subspace $Q$, satisfying $u'(0)=u(0),u'(\pi)=-u(\pi)$. 
Right: Convergence of $\angle(u,\uh)$ and its bounds. 
}
  \label{fig:SRFolland}
\end{figure}

Finally, in Figure~\ref{fig:SRFolland_res} we illustrate the behavior of the residual function $\mathcal{A}\uh-\lh \uh$ as $k$ varies. 
Note that $\uh$ is determined up to a sign flip $\pm 1$; 
here we chose $\uh(1)>0$. 
We make two observations. First, evidently the norm 
$\|\mathcal{A}\uh-\lh \uh\|$ decays rapidly as $k$ increases, essentially like the right plot in Figure~\ref{fig:SRFolland}. The second and more interesting observation is that the residuals appear to become more and more oscillatory (non-smooth) as $k$ grows. This is a typical phenomenon, and can be explained as follows. As emphasized repeatedly in this paper, R-R forces the residual to be orthogonal to $Q$, which contains the ``smoothest'' functions. Consequently, in the Legendre expansion of the residual $\mathcal{A}\uh-\lh \uh=\sum_{i=0}^\infty c_iP_i(x)$, $|c_i|$ are small for $i< k$; they are bounded roughly by $\|u_2\|$, which is $O(\|\mathcal{A}\uh-\lh\uh\|^2)$ by~\eqref{eq:Piu2}. This also reflects the main result in~\cite{knyazevmc97}; recall Remark~\ref{rem:dksaad}. 
By growing $k$, the residual becomes orthogonal to more and more of these smoothest functions, and therefore 
 becomes more oscillatory. 

\begin{figure}[htpb]
  \begin{center}
      \includegraphics[width=1.0\textwidth]{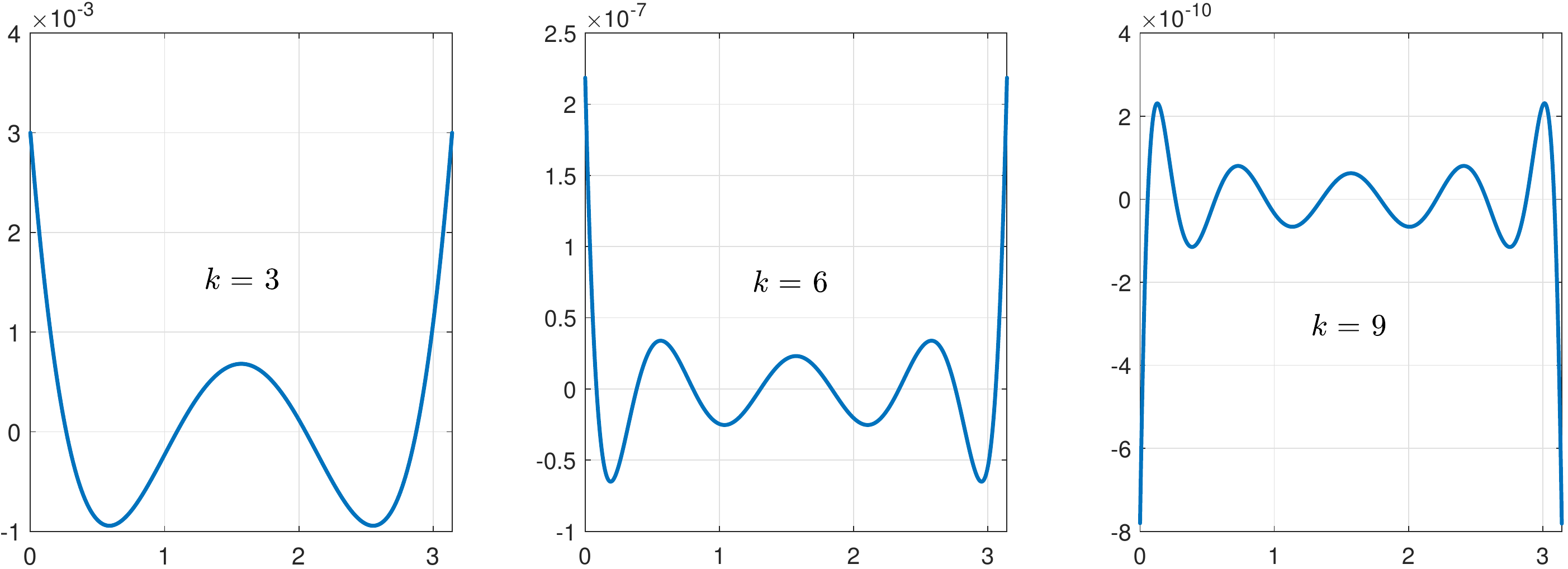}
  \end{center}
  \caption{
Residual function $\mathcal{A}\uh-\lh \uh$ for $k=3, 6$ and $9$. 
}
  \label{fig:SRFolland_res}
\end{figure}

\subsection*{Acknowledgements}
I am grateful to Andrew Knyazev for helpful discussions and bringing~\cite{ovtchinnikov2006cluster} to my attention, and Mayuko Yamashita for the help in Section~\ref{sec:hilbert}. 

\def\noopsort#1{}\def\l{\char32l}\def\v#1{{\accent20 #1}}
  \let\^^_=\v\def\hbk{hardback}\def\pbk{paperback}

\end{document}